\documentclass[12pt]{amsart}
\usepackage{graphicx}
\usepackage{amstext}
\usepackage{amsfonts, amssymb}
\usepackage{amsmath}
\usepackage{cancel}
\usepackage{mathtools}
\usepackage{color}
\usepackage{float}
\usepackage{latexsym}
\usepackage{indentfirst}
\usepackage{amssymb}
\usepackage[utf8]{inputenc}
\usepackage{tikz}
\usepackage[utf8]{inputenc}
\usepackage[english]{babel}
\usepackage{amsmath}
\usepackage{amssymb}
\usepackage{amsthm}
\usepackage{hyperref}

\newcommand{\R}{\mathbb{R}}
\newcommand{\Z}{\mathbb{Z}}
\newcommand{\C}{\mathbb{C}}

\newcommand{\K}{\mathcal{K}}

\newcommand{\CP}{\mathbb{C}{\mathrm P}}

\advance\textheight by \topskip
\newtheorem{theorem}{Theorem}

\newtheorem{lemma}[theorem]{Lemma}
\newtheorem{corollary}[theorem]{Corollary}

\theoremstyle{definition}
\newtheorem{remark}{Remark}

\usepackage{fancyhdr}

 \title[Equivariant Minimal sufaces by first eigenfunctions]{Minimal immersions by first eigenfunctions of Riemann surfaces with large conformal automorphism groups. \\The Klein quartic}

\vspace{1cm} 

\author{Antonio Ros\\
 }     

\thanks{This work is supported in part by 
 AEI, Eur. 
 Exc. prog. 
 grant EUR2024-153556,
MINECO/MICINN/FEDER grant PID2020117868GB-I00,
IMAG–Maria de Maeztu grant CEX2020-001105-M / AEI / 10.13039/501100011033, 
MICINN grant PID2020-117868GB-I00 
and regional grant P18-FR4049.}

\begin{document}

\maketitle
\vspace{-.8cm} 
\begin{flushright}
{\small In memory of William H. Meeks III \hspace{.7cm}\textcolor{white}{xxxxx}}
\end{flushright}

\begin{abstract}
Minimal immersions of closed surfaces into spheres by first eigenfunctions play an important role in both the theory of minimal surfaces and spectral geometry. 
In this paper, we establish the existence of equivariant conformal minimal immersions of this type for Hurwitz surfaces and other families of 
surfaces with large automorphism group.

Finally, we explicitly construct a  conformal minimal equivariant immersion
of the Klein quartic into the unit sphere $S^{7}$  by 
first eigenfunctions. Ths immersion is a natural candidate for maximizing the normalized $\lambda_1$ among closed genus-3 surfaces.

\vspace{.1cm}

\noindent

 {\it 2020 Mathematics Subject Classification:} 53A42,  58C40, 35P15, 14H37.
%

\end{abstract}

\tableofcontents


\section{Introduction}


Let $f:(\Sigma, ds^2) \to S^n(1)$ be a minimal immersion of a compact surface into a unit sphere. Then, the coordinate functions of $f$ are eigenfunctions of the Laplacian with eigenvalue $\lambda=2$. Of particular interest is the case when these are first eigenfunctions, that is, $\lambda_1(ds^2)=2$. For genus $\gamma=0$, the round sphere is the only such surface. For $\gamma=1$, there are exactly two possibilities: the equilateral flat torus in $S^5(1)$ and the Clifford torus in $S^3(1)$, \cite{elsoufi}. This property is also shared by the Lawson minimal surfaces in the 3-sphere, as proved by Choe and Soret \cite{choe1}, and by several other interesting examples.

An important problem in spectral geometry is the study of extremal metrics for the first eigenvalue of the Laplacian. In this paper, we focus on this problem within a fixed conformal class. 

Given a compact Riemann surface $\Sigma$, we will consider conformal metrics on $\Sigma$ with  a finite number of conical singularities.
A conformal metric $ds^2$ with a  {\it conical singularity} at $p\in \Sigma$, is expressed as \(ds^2 = e^{2\varphi} \vert{}z\vert{}^\alpha \vert{}dz\vert{}^2\), 
where $z$, \(\vert{}z\vert{} < \varepsilon\), is a complex coordinate  around the point $p$,  \(\varphi\) is a regular function and \(\alpha \) is a real number greater than \(-2\), \(\alpha > -2\). The bound on $\alpha$ ensures that the metric  has a finite local area and diameter, and the total {\it cone angle}  
around the singularity 
\(z = 0\) is given by \(\theta = \pi(\alpha + 2)\). In particular, if \(\alpha = 0\), then the metric is unbranched  and the angle is equal to \(2\pi\).

\vspace{.1cm}

Any  harmonic map  on the Riemann surface into the unit sphere, \mbox{$f\hspace{-0.07cm}:\hspace{-0.03cm}\Sigma \longrightarrow S^{n}(1)$}, 
satifies, using any auxiliary conformal metric without singularities $ds^2$,  the equation \mbox{$\Delta f + |\nabla f|^2 f=0$},
 where the Laplacian and the length of the gradient are taken with respect to \(ds^{2}\).
  If $f$ is non constant, the associated metric 
$
ds^2_f=\frac{1}{2} |\nabla f|^2 ds^2
$ 
 has finitely many conical points and the linear coordinates of $f$ are eigenfunctions of $ds^2_f$ with eigenvalue igual to $2$, see \cite{knpp,helein}. 
 
 \vspace{.1cm}
 
 The {\it normalized first eigenvalue}  of the metric 
 is given by the product of the first eigenvalue of the Laplacian and the area of the metric, $\bar{\lambda}_1(ds^2)=\lambda_1(ds^2)Area(ds^2)$.
Petrides \cite{petrides} shows that, on the space of conformal metrics, this operator  
attains its maximum,
 \[
\Lambda_1(\Sigma) = \sup \left\{ \bar{\lambda}_1(dt^2)
\, \big{|}\,  dt^2 \,\, {\rm is\,\, a \,\, conformal \,\,metric\,\,on \,\,\Sigma }  \right\},
\] 
  and associated with this extremal metric $ds^{2}$, there exists a harmonic map 
  \mbox{$f\hspace{-0.07cm}:\hspace{-0.03cm}\Sigma \longrightarrow S^{n}(1)$}
   into the unit sphere such that $ds^2=ds^2_f$ whose coordinates are first eigenfunctions of the metric.
    
\vspace{.2cm}

The value $\Lambda_1(\Sigma)$ is bounded above in terms of the topology of the surface; see Yang and Yau \cite{yy}. Let $\mathcal{M}\mathrm{etrics}(\gamma)$ and $\mathcal{M}_\gamma$ be the space of Riemannian metrics on a compact surface of genus $\gamma\geq 0$ and the moduli space of compact Riemann surfaces with this topology, 
respectively. The supremum of the normalized first eigenvalue over all metrics on a surface of genus $\gamma$ is denoted by
\[
\Lambda_1(\gamma) = \sup \left\{\bar{\lambda}_1(dt^2)\ \middle| \ dt^2 \in  \mathcal{M}\mathrm{etrics}(\gamma) \right\} =
\sup \left\{ \Lambda_1(\Sigma) \;\middle|\; \Sigma \in \mathcal{M}_\gamma \right\}. 
\]
 For every compact orientable surface of genus $\gamma$, there exists a metric with singularities  
that attains  the maximum  value $\Lambda_1(\gamma)$, see   Petrides, Karpukhin and Stern \cite{petrides2,kps}.
In this case, the extremal metric $ds^2$ determines a Riemann surface $\Sigma\in \mathcal{M}_\gamma$, and the associated harmonic map $f$  
 is conformal. Hence,  $f: (\Sigma, ds^2)\longrightarrow S^n(1)$ is a branched minimal immersion by first eigenfunctions,  
Nadirashvili \cite{nadir}, and the induced metric and the associated  one  coincide, $ds^2=\langle df, df \rangle = ds^2_f$.

    \vspace{.1cm}
    
Maximizing metrics for a given genus are known in only a few cases:
$\Lambda_1(0)=8\pi$, Hersch \cite{hersch}, with the equality holds just for the constant curvature metrics;
Nadirashvili \cite{nadir} proved that  $\Lambda_1(1)=\frac{8}{\sqrt{3}}\pi \approx 14.510\pi$, where the equality holds for the flat equilateral metric,
and Nayatani and Shoda \cite{nayatani} show $\Lambda_1(2)=16\pi$, the equality holding for the Bolza spherical surface and some other close branched spherical metrics.

  \vspace{.2cm}
  
Karpukhin, Kusner, McGrath, and Stern \cite{kusner} investigate metrics that are extremal for the functional $\bar{\lambda}_1$ among those invariant under prescribed symmetry groups and, as a consequence of their analysis, construct several new embedded equivariant compact minimal surfaces in $S^3(1)$.
 Further results on equivariant eigenvalue optimization, including existence and regularity results for maximizing metrics within a conformal class, have been obtained by Vinokurov \cite{vino}.

  \vspace{.1cm}

In this paper, we consider closed Riemann surfaces of genus $\gamma \geq 2$ 
with a large group of holomorphic automorphisms $\operatorname{Aut}(\Sigma)$. 
The most symmetric cases are \textit{Hurwitz surfaces}, whose automorphism groups 
have order $84(\gamma-1)$. The next largest families exhibit orders of 
$48(\gamma-1)$ and $40(\gamma-1)$,  \cite{farkas}.
We study the metrics that maximize the first eigenvalue within these conformal classes and, drawing on the classical theories of Riemann surfaces 
and minimal surfaces and \cite{kusner,vino}, we obtain the following result:
\begin{theorem}
\label{teorema1}
Let $\Sigma$  a compact Riemann surface of genus $\gamma\geq 3$ and $G=\operatorname{Aut}(\Sigma)$ be its group of holomorphic automorphisms. If $|G|=84(\gamma-1)$, $48(\gamma-1)$ or $40(\gamma-1)$, then there exists a $G$-equivariant, unbranched conformal minimal immersion 
$f:\Sigma\longrightarrow S^n(1)\subset\mathbb{R}^{n+1}$ by first eigenfunctions. Furthermore, if \hspace{.01cm} $|G|=84(\gamma-1)$ or $48(\gamma-1)$, then 
the fourth-order Hopf differential $\Omega_2(f)=\langle f_{zz},f_{zz}\rangle dz^4$ vanishes identically. 
\end{theorem}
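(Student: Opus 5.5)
We will obtain $f$ from a $G$-invariant maximizer. By the equivariant version of Petrides' theorem (Karpukhin--Kusner--McGrath--Stern \cite{kusner}, Vinokurov \cite{vino}), there is a $G$-invariant conformal metric $ds^2$ on $\Sigma$, possibly with finitely many conical points, that maximizes $\bar\lambda_1$ among $G$-invariant conformal metrics. Attached to it is a harmonic map $f:\Sigma\to S^n(1)$ with $ds^2=ds^2_f$, whose coordinates span a subspace of the first eigenspace $E_1$. Since $G$ acts isometrically on $E_1$, we may take $f$ to be $G$-equivariant: $f\circ g=\rho(g)f$ for an orthogonal representation $\rho$. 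The remaining task is to show that $f$ is conformal and unbranched, and that $\Omega_2(f)$ vanishes in the first two cases. All of this will come from holomorphic differentials and Riemann--Roch on the orbifold $\Sigma/G$.

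For $|G|=84(\gamma-1)$, $48(\gamma-1)$ and $40(\gamma-1)$, the quotient $\Sigma/G$ is a sphere with three cone points of types $(2,3,7)$, $(2,3,8)$ and $(2,4,5)$ respectively. The Hopf differential $\Omega_1=\langle f_z,f_z\rangle dz^2$ is holomorphic because $f$ is harmonic. Its norm $|\Omega_1|$ is $G$-invariant, since $\rho(g)$ is orthogonal. The twisting character $\chi:G\to S^1$, defined by $g^*\Omega_1=\chi(g)\Omega_1$, is trivial on the commutator subgroup.

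\begin{itemize}
\item In the Hurwitz case $G$ is perfect, so $\chi$ is trivial and $\Omega_1$ is $G$-invariant.
\item In the other two cases we also use $\chi$ of order at most $2$. If needed, we replace $G$ by the index-$\le 2$ subgroup $\ker\chi$ and run the same count on the corresponding quotient orbifold.
\end{itemize}

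An invariant holomorphic quadratic differential descends to a meromorphic quadratic differential on $S^2$. At a cone point of order $m$ it has pole order at most $\lfloor 2(1-1/m)\rfloor$, which is $1$ for every $m\ge2$. The degree count is $-4+3<0$, so $\Omega_1\equiv0$ and $f$ is conformal, hence a branched minimal immersion.

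The same method applies to the quartic differential $\Omega_2(f)=\langle f_{zz},f_{zz}\rangle dz^4$, which is holomorphic once $f$ is conformal and minimal. It descends with pole order $\lfloor 4(1-1/m)\rfloor$ at a cone point of order $m$.

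\begin{itemize}
\item For $(2,3,7)$ the bounds are $2+2+3=7<8$, so $\Omega_2\equiv0$.
\item For $(2,3,8)$ they are $2+2+3=7<8$, so $\Omega_2\equiv0$.
\item For $(2,4,5)$ they are $2+3+3=8$, and an invariant quartic differential may exist. This matches the statement, which claims no vanishing in that case.
\end{itemize}

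Unbranchedness is the delicate step. A branch point set is $G$-invariant, so it is a union of orbits. The branching divisor of a conformal minimal immersion is governed by $\partial f$, viewed as a section related to $K_\Sigma$. Concretely, $f_z$ is a nowhere-vanishing-after-division section, and the degrees of the branch divisor and of the curvature integral are linked by Gauss--Bonnet. The first approach is to bound the total branching order using $\bar\lambda_1\le\Lambda_1$ and Yang--Yau type area estimates, $\mathrm{Area}=2\pi d$ with $\lambda_1=2$, and to compare this with the minimal orbit sizes $|G|/2$, $|G|/3$, $|G|/7$ (resp. $/8$, $/5$). An orbit of branch points contributes at least $|G|/7\cdot 1=12(\gamma-1)$ to the branching degree.

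That contribution should contradict an upper bound on area. We would use the Yang--Yau bound $\mathrm{Area}\cdot 2\le 8\pi\lfloor(\gamma+3)/2\rfloor$, combined with a Plücker/Gauss--Bonnet formula expressing the branching in terms of the normal bundle degree and the area. The main obstacle is this bound, specifically making the Plücker-type estimate sharp enough to exclude all orbits. If that fails, we would instead use local equivariance. At a cone point of order $m$, the stabilizer $\mathbb{Z}_m$ acts on the lowest-order term $z^{k+1}$ of $f$, and $\rho$ restricted to the stabilizer gives representation-theoretic constraints on the branching order $k$. Combined with $\Omega_2=0$ (isotropy to second order), these constraints should force $k=0$.
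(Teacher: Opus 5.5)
Your treatment of $\Omega_1$ and $\Omega_2$ is correct. Pushing the invariant differentials down to $S^2$ and bounding pole orders at the three cone points gives $3<4$ for $\Omega_1$, and $7<8$ for $\Omega_2$ in the cases $(2,3,7)$ and $(2,3,8)$. This is an equivalent reformulation of the paper's count in Lemma~\ref{hopf}, where a nonzero holomorphic $2k$-differential has $4k(\gamma-1)$ zeros and its zero divisor is a union of $G$-orbits. The detour through the character $\chi$ is unnecessary. Since $\rho(g)$ is a real orthogonal matrix, it preserves the complex-bilinear extension of $\langle\cdot,\cdot\rangle$, so $g^*\Omega_k=\Omega_k$ exactly for every $g\in G$.

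The genuine gap is unbranchedness, which you leave unresolved and flag as the main obstacle. No Pl\"ucker formula or normal-bundle degree is needed; the missing ingredient is the Gauss equation for a minimal surface in $S^n(1)$, $|\sigma|^2=2-2K$, which gives the pointwise bound $K\le 1$.
\begin{itemize}
\item Gauss--Bonnet for the branched metric, $\int_\Sigma K\,dA=2\pi(2-2\gamma+b)$ with $b$ the total branching order, then yields $b\le \operatorname{Area}(f)/2\pi+2(\gamma-1)$.
\item Since $\lambda_1(ds^2)=2$, Yang--Yau gives $2\operatorname{Area}(f)=\bar\lambda_1(ds^2)\le 8\pi[(\gamma+3)/2]\le 4\pi(\gamma+3)$, so $b\le 3\gamma+1$.
\item The branch divisor is $G$-invariant with constant order along orbits, so $b>0$ forces $b\ge\mathfrak{o}(G)\ge 6(\gamma-1)$.
\item Finally $6(\gamma-1)>3\gamma+1$ exactly when $\gamma\ge 3$. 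This is Theorem~\ref{unbranched} as it is used in the proof of Theorem~\ref{teor11}.
\end{itemize}

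Your fallback via local equivariance would not close the gap, for three reasons. First, branch points may a priori lie on free orbits, where the stabilizer is trivial and imposes nothing. Second, at a cone point of order $m$, the condition $\rho(g)v=\epsilon^{k+1}v$ on the leading coefficient of $f_z$ only restricts $k$ modulo $m$. Third, $\Omega_2\equiv 0$ is not available in the $(2,4,5)$ case.
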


In genus $3$, the most symmetric Riemann surface is the Klein quartic $\mathcal{K}$, whose automorphism group $\operatorname{Aut}(\mathcal{K})$ is a simple group of order $168$.
This surface is naturally conjectured  to admit a conformal metric that maximizes the normalized  first eigenvalue among all metrics on a surface of genus $3$.
As a first step in this direction,  we describe the corresponding conformal minimal immersion by first eigenfunctions.

\begin{theorem} 
\label{thm:mi_teorema}
There is  a full, unbranched, conformal  minimal immersion 
$f \colon 
{\mathcal K}
\longrightarrow S^7(1)\subset \R^8$ of the Klein quartic $\mathcal K$ into  the unit sphere  by first eigenfunctions.

 \begin{itemize}
 \item[\small{ i)}]  The immersion $f$ 
 is $\rho_6$-equivariant, where $\rho_6$ is the 8-dimensional irreducible representation of
the automorphism group $G_{168}, \mathrm{Table} \hspace{.04cm} \ref{tabla168}$.

\vspace{.05cm}

 \item[\small{ ii)}]  {\it The Hopf differential $\Omega_2(f)$ vanishes.}

   \vspace{.05cm}

  \item[\small{ iii)}] $10.712\pi < \mathrm{Area}(f) < 10.834\pi$.

   \vspace{.05cm}

  \item[\small{ iv)}] The induced metric $ds^2$ maximizes  $\bar{\lambda}_1$ within the conformal class 
 and it is the unique conformal metric on $\mathcal K$ with this property. 
 The multiplicity of the first eigenvalue is $m_1(ds^2)=8$.
 
    \vspace{.05cm}

  \item[\small{ v)}]  
Up to orthogonal congruence, \(f\) is the unique full \(G_{168}\)-equivariant conformal minimal
 immersion of the Klein quartic into a sphere \(S^n(1)\) by first eigenfunctions.
%
   \end{itemize}

\end{theorem}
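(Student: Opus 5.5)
We plan to build $f$ from the representation theory of $G_{168}$ acting on harmonic maps, and then use Theorem \ref{teorema1} together with a uniqueness argument. By Theorem \ref{teorema1} (with $\gamma=3$, $|G|=168=84(\gamma-1)$) there is a $G_{168}$-equivariant unbranched conformal minimal immersion $f:\K\to S^n(1)$ by first eigenfunctions with $\Omega_2(f)=0$. Let $ds^2$ be the induced metric. Then $ds^2$ is $G_{168}$-invariant, and the first eigenspace $E_1$ of $ds^2$ is a real $G_{168}$-module containing the coordinate functions of $f$. We may replace $f$ by its linear span and assume it is full. Then $f$ is $\rho$-equivariant for some real orthogonal representation $\rho$ with $\dim\rho=n+1$, and $\rho$ contains no trivial summand, because $\int_\K f\,dA=0$ holds for first eigenfunctions.

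The first real step is to pin down $\rho$. Recall that $G_{168}$ has irreducible characters of degrees $1,3,\bar 3,6,7,8$; the two degree-3 characters are complex conjugates, so over $\R$ they combine into a real 6-dimensional piece. Conformality and $\Omega_2=0$ give strong restrictions. The holomorphic $1$-form-valued differential $\partial f$ has $\langle f_z,f_z\rangle=0$, and $\Omega_2=0$ gives $\langle f_{zz},f_{zz}\rangle=0$. So $f$ is superminimal to order $2$, and the osculating flags give $G$-equivariant holomorphic data. One natural choice is to use the known fact that the space of holomorphic $k$-differentials on $\K$ decomposes explicitly under $G_{168}$, via Riemann–Roch and the Chevalley–Weil formula. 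For a conformal harmonic map we can compare $\mathrm{Area}(f)=\int |\nabla f|^2/2$ with the Yang–Yau bound. Since $\mathrm{Area}<\Lambda_1(3)\le 24\pi$ (and better bounds are known), the multiplicity $m_1=\dim E_1$ is bounded, which limits $n+1$ to small values; we can use the Cheng-type bound $m_1\le 2\gamma+3=9$. Together with fullness and the absence of the trivial summand, $n+1\le 9$ leaves only $\rho\in\{3\oplus\bar3,\,6,\,7,\,8\}$. We then rule out all but $\rho_6$ (the $8$-dimensional one). For the $6$-dimensional real pieces, equivariance forces $f$ to factor through a configuration incompatible with the branch-point count: the ramification of $\K\to\K/G=S^2$ at points of order $2,3,7$ would force branch points of $f$ via the fixed-vector structure of $\rho$ at stabilizers. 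The stabilizer of an order-$7$ point acts on $T_p$ by a primitive $7$th root, and $df_p$ must intertwine; this works only if $\rho|_{\Z_7}$ contains that character with the right multiplicity. Checking the characters at the order-$7$ element (and similarly at orders $3,2$) rules out the other candidates, leaving $\rho_6$ and giving (i). Fullness in $S^7$ follows.

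Next, we construct $f$ explicitly and estimate its area. We write $f$ on the fundamental $(2,3,7)$ hyperbolic triangle via the $\rho_6$-isotypic decomposition. Concretely, we realize $f$ through a meromorphic isotropic curve $\K\to \CP^7$ on the quadric, using the Klein quartic model $x^3y+y^3z+z^3x=0$. Its coordinates are chosen among the $G$-module of sections spanning $\rho_6\otimes\C$, constrained by the isotropy conditions; this should give a finite-parameter family. Imposing harmonicity (or, equivalently, the equivariant minimal condition) fixes $f$ up to congruence. The bounds in (iii) then come from numerically rigorous integration of $|\nabla f|^2$ over the triangle times $336$.

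For (iv), we invoke Petrides together with the Karpukhin–Kusner–McGrath–Stern and Vinokurov equivariant results. The maximizer in the conformal class is induced by a harmonic map by first eigenfunctions. We show that any maximal metric can be averaged to a $G$-invariant one, and that uniqueness of the harmonic map class forces it to coincide with $ds^2$. Uniqueness of the conformal maximizer follows from the standard result that a maximizing metric in a conformal class is unique when the corresponding harmonic map is unique up to isometry. We also get $m_1=8$ from showing $E_1$ is exactly $\rho_6$, ruling out extra summands by the bound above. Item (v) is the uniqueness part of the construction step.

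The main obstacle is the uniqueness and identification step: showing that within the $\rho_6$-equivariant isotropic family only one map is harmonic, and that no extra eigenfunctions appear at eigenvalue $2$. We would first try to reduce to a one-parameter ODE on the triangle using the symmetry, and then to a sign or monotonicity argument.
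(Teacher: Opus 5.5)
Your proposal has a genuine gap at its central step: excluding the $7$-dimensional representation $\rho_5$. The first-order local checks you describe cannot do this. $\rho_5$ has character $0$ on elements of order $7$, so its restriction to $\langle\phi_7\rangle$ is the regular representation of $\Z_7$, containing the trivial character and every nontrivial one. Its restrictions to the stabilizers of orders $3$ and $2$ also contain both trivial and nontrivial characters. Hence $f(p)$ can be a unit fixed vector and $df_p$ can intertwine at every center. (For the $6$-dimensional pieces the obstruction is likewise not about branch points or $df_p$, since every nontrivial $7$th root occurs there too. It is simply that $\rho(\phi_7)$ has no nonzero fixed vector, forcing $f(p_i)=0$.)

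The missing idea is global. $\phi_7$ fixes all three $7$-centers $p_1,p_2,p_3$, so each $f(p_i)$ lies on the fixed line $L$ of $\rho_5(\phi_7)$. Since $\phi_3$ normalizes $\langle\phi_7\rangle$, $\rho_5(\phi_3)$ preserves $L$; having order $3$, it fixes $L$ pointwise. Thus $f(p_1)=f(p_2)=f(p_3)$ is a triple point, and Li--Yau gives $\mathrm{Area}(f)\ge 12\pi$. This contradicts the upper bound in (iii). Yang--Yau only yields $\mathrm{Area}(f)\le 12\pi$, so you need the sharper genus-$3$ bound $\bar\lambda_1\le 16(4-\sqrt7)\pi$. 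Relatedly, (iii) needs no explicit construction or rigorous numerics, which you have not carried out anyway. Since $2\,\mathrm{Area}(f)=\Lambda_1(\K)$ by Theorem \ref{teor11}, Cook's value of $\bar\lambda_1$ for the hyperbolic metric gives the lower bound and the genus-$3$ bound gives the upper one.

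Second, your reason for excluding a trivial summand is wrong. $\int_\K f\,dA=0$ is compatible with a $\rho(G)$-fixed vector $v$: then $\langle f,v\rangle$ is a $G$-invariant first eigenfunction with mean zero, not a constant. You need Corollary \ref{chi1}, which compares the nodal-set vertex count with the orbit size $24>2\gamma$ and applies Oikawa's bound to the nodal domains. Without it, $m_1\le 9$ still allows $V_1(ds^2)=\rho_1\oplus\rho_6$, so neither $m_1=8$ nor $n=7$ follows. The fixed-vector exclusion of the $6$-dimensional pieces also breaks, because $f(p_i)$ could lie in a trivial summand.

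Finally, (iv) and (v) need neither averaging nor the ODE/monotonicity analysis you flag as the main obstacle. By Theorem \ref{mrkarpu}, any other such immersion $h$ induces the same metric $ds^2$. Its coordinates therefore lie in the $8$-dimensional space $V_1(ds^2)=V_1(\rho_6)$, so $h=Af$ with $A$ intertwining $\rho_6$ and the representation of $h$. Since $\rho_6$ is absolutely irreducible, Schur's lemma together with $|h|=|f|=1$ makes $A$ orthogonal.
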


\section{Automorphism groups}


A closed Riemann surface $\Sigma$ of genus $\gamma\geq 2$ has a finite group of holomorphic automorphisms $\operatorname{Aut}(\Sigma)$. Let $G\leq \operatorname{Aut}(\Sigma)$ be a subgroup of order $|G|$. The quotient $\Sigma/G$ is a compact Riemann surface, and the natural projection
 $\Pi:\Sigma\longrightarrow \Sigma/G$ is a holomorphic map of degree $|G|$.  
 A classical result  states that the order of the group satisfies $|G|\leq 84(\gamma-1)$. More especifically, the possible values are 
 $|G|= 84(\gamma-1)$, $ 48(\gamma-1)$, $40(\gamma-1)$ or less, (see Farkas, Kra \cite{farkas} pp. 258--260).
  The surfaces that attaint the maximum value for $|G|$ are  the {\it Hurwitz surfaces}.  
  
\vspace{.1cm}

 For each point $p \in \Sigma$, its stabilizer $G_p= \{\phi \in G \, |\,  \phi(p)=p \}$ is a finite 
cyclic subgroup. The set of points in $\Sigma$  with non-trivial stabilizers is finite and the order of each $G_p$ coincides with the ramification index of the branch point $\Pi(q) \in \Sigma/G$. 

\vspace{.1cm}

The geometric behavior of the pair $(\Sigma,G)$ 
allows us to define the {\it signature}
\[
(\gamma_0; m_1, \dots, m_r),
\] 
where $g_0$ is the genus of the 
quotient Riemann surface $\Sigma/G$, and the integers $2\leq m_1\leq \cdots \leq m_r$ 
represent the unique orders of the non-trivial cyclic stabilizers associated with each of the $r$ branch points of $\Pi$.
Moreover, the Riemann-Hurwitz formula applied the projection map, relates the genus of $\Sigma$, the order of $G$  and  the signature,
\begin{equation}
\label{RH}
g-1 = |G|(g_0-1) + \frac{|G|}{2} 
\sum_{i=1}^{r} \big(1- \frac{1}{m_i}\big).
\end{equation}

The pairs $(\Sigma, G)$ with $\vert{}G\vert{} > 4(g-1)$ are called  {\it Riemann surfaces with large automorphism group}, see Kukarni \cite{kulkarni}. 
Under this condition, it is known that the genus of the quotient surface is $g_0=0$ and $r=3$ or $4$. 

If $r=3$, the signature is said to be {\it triangular}, and the three most symmetric cases above correspond to the signatures $(0;2,3,7)$, $(0;2,3,8)$ and $(0;2,4,5)$, respectively.
 Magaard, Shaska, Shpectorov, and Völklein \cite{shaska2} provide tables of compact Riemann surfaces of genus $\gamma\leq 10$ with complete automorphism groups $G=\text{Aut}(\Sigma)$ satisfying $\vert{}G\vert{}>4(\gamma-1)$. 
 
 \vspace{.2cm}

For any $p\in \Sigma $, the orbit $G \cdot p = \{\phi(p)\ | \ \phi\in G\}$, 
is related to the stabilizer $G_p$ by the Orbit-Stabilizer Theorem $|G \cdot p|\cdot |G_p| = |G|$ and, as the
coefficients $m_i$  are ordered, the size of the smallest 
  orbit of $G$ is given as
\[
\mathfrak{o}(G) = \min \{|G \cdot p| \  \, \big/ \,  p\in \Sigma, |G_p| > 1 \} = \frac{|G|}{m_r}.
\]
In this paper, we focus on the case $\mathfrak{o}(G) > 4(g-1)$. By analogy with the previous case, we say that the pair $(\Sigma, G)$ is a Riemann surface with  $\textit{large\ orbits}$. Note that this condition is stronger than that of a large automorphism group.

 \vspace{.1cm}

As a novel result of this section, we classify automorphism groups with large orbits, 
building on Breuer's comprehensive study of large automorphism groups (\cite{breuer}, Chapter 1, p. 16).
\begin{theorem}
\label{o(G)}
Let $\Sigma$ be a closed Riemann surface of genus $\gamma \geq 2$, 
$G$ an automorphism group of $\Sigma$, and 
let $\mathfrak{o}(G)$ denote the cardinality of the smallest orbit of $G$. 
Then, the condition $\mathfrak{o}(G) > 4(\gamma-1)$ occurs just in the following cases:
\begin{equation}
\renewcommand{\arraystretch}{1.2}
\begin{tabular}{ccc}
Signature &  $\vert{}G\vert{}$  & $\mathfrak{o}(G)$ \\
\hline
$(0; 2,3,7)$ & $84(\gamma-1)$ & $12(\gamma-1)$ \\
$(0; 2,3,8)$ & $48(\gamma-1)$ & $6(\gamma-1)$ \\
$(0; 2,4,5)$ & $40(\gamma-1)$ & $8(\gamma-1)$  \\
$(0; 3,3,4)$ & $24(\gamma-1)$ & $6(\gamma-1)$
\end{tabular}
\label{tabla1}
\end{equation}
\end{theorem}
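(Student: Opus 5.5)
Since $\mathfrak{o}(G)=|G|/m_r$, the hypothesis $\mathfrak{o}(G)>4(\gamma-1)$ gives in particular $|G|>4(\gamma-1)$. So $(\Sigma,G)$ has a large automorphism group, and by the facts recalled above the quotient has genus $g_0=0$ and $r=3$ or $r=4$. I then rewrite the condition using the Riemann--Hurwitz formula \eqref{RH} with $g_0=0$:
\[
\gamma-1=|G|\Big(-1+\tfrac12\sum_{i=1}^r\big(1-\tfrac1{m_i}\big)\Big)=:|G|\,\mu,
\]
where $\mu>0$. The inequality $|G|/m_r>4(\gamma-1)$ is then equivalent to $1>4m_r\mu$, that is,
\[
\mu<\frac{1}{4m_r}.
\]
This reduces the theorem to a purely arithmetic classification of tuples $2\le m_1\le\dots\le m_r$ with $0<\mu<1/(4m_r)$.

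\emph{Case $r=4$.} Here $\mu=1-\tfrac12\sum 1/m_i$. The smallest positive value occurs at $(2,2,2,3)$, giving $\mu=1/12$. The bound $\mu<1/(4m_4)$ then forces $m_4<3$, so $m_4=2$. But then every $m_i=2$ and $\mu=0$, which is impossible. So there is no case with $r=4$.

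\emph{Case $r=3$.} Here $\mu=\tfrac12\big(1-\tfrac1{m_1}-\tfrac1{m_2}-\tfrac1{m_3}\big)$, and the condition becomes
\[
1-\frac1{m_1}-\frac1{m_2}-\frac1{m_3}<\frac1{2m_3}.
\]
Equivalently,
\[
\frac1{m_1}+\frac1{m_2}+\frac{3}{2m_3}>1,
\]
together with the hyperbolicity condition $\sum 1/m_i<1$. I split on $m_1$:
\begin{itemize}
\item If $m_1\ge4$, the left side is at most $\tfrac14+\tfrac14+\tfrac38<1$, so there are no cases.
\item If $m_1=3$, we need $\tfrac1{m_2}+\tfrac3{2m_3}>\tfrac23$. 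For $m_2=3$ this gives $m_3<4.5$, and hyperbolicity gives $m_3\ge4$, so $(3,3,4)$. For $m_2\ge4$ the left side is at most $\tfrac14+\tfrac38<\tfrac23$, so there are no cases.
\item If $m_1=2$, we need $\tfrac1{m_2}+\tfrac3{2m_3}>\tfrac12$.
 \begin{itemize}
 \item For $m_2=3$: this gives $m_3<9$, and hyperbolicity gives $m_3\ge7$, so $m_3=7$ or $8$.
 \item For $m_2=4$: this gives $m_3<6$, and hyperbolicity gives $m_3\ge5$, so $m_3=5$.
 \item For $m_2\ge5$: since $m_3\ge m_2$, the left side is at most $\tfrac15+\tfrac3{10}=\tfrac12$, so there are no cases.
 \end{itemize}
\end{itemize}
This yields exactly the four signatures $(0;2,3,7)$, $(0;2,3,8)$, $(0;2,4,5)$ and $(0;3,3,4)$.

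Finally, I compute $\mu$ for each signature: $1/84$, $1/48$, $1/40$ and $1/24$, respectively. This gives $|G|=(\gamma-1)/\mu$ as in the table, and $\mathfrak{o}(G)=|G|/m_3$ equals $12(\gamma-1)$, $6(\gamma-1)$, $8(\gamma-1)$ and $6(\gamma-1)$, respectively. In each case $\mathfrak{o}(G)>4(\gamma-1)$, which confirms the converse direction. No existence claim is needed, since the statement only asserts that the condition occurs in these cases.

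The main obstacle is keeping the case analysis exhaustive, especially for $r=4$. There I rely on the recalled fact that large automorphism groups have $g_0=0$ and $r\le4$ (from Breuer/Kulkarni); if that fact were not available, I would first verify directly from \eqref{RH} that $g_0\ge1$ or $r\ge5$ forces $\mu\ge1/2$, which violates $\mu<1/(4m_r)\le1/8$.
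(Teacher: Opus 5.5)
Your proof is correct, and it differs from the paper's mainly in the case $r=3$. The paper splits that case at $|G|=24(\gamma-1)$. For $|G|\ge 24(\gamma-1)$ it reads off Breuer's list of nine triangular signatures and keeps those with $|G|/m_3>4(\gamma-1)$. For $|G|<24(\gamma-1)$ it combines $\sum 1/m_i<\frac{11}{12}$ with the inequality $\frac1{m_1}+\frac1{m_2}+\frac3{2m_3}>1$ (the same one you derive), which forces $m_3\le 5$, and then checks the remaining triples.

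You observe that this single inequality, together with hyperbolicity, already determines the signatures through a short enumeration on $(m_1,m_2)$. The split by group order and the appeal to Breuer's table are therefore unnecessary. Your argument is self-contained and uniform across all group orders, while the paper's is shorter on the page because it delegates the high-order range to the literature. For $r=4$ the two arguments coincide: your bound $\mu\ge\frac1{12}$ is exactly Breuer's $|G|\le 12(\gamma-1)$, which you verify directly instead of citing.

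Two small points:
\begin{itemize}
\item In the branch $m_1=2$ you skip $m_2=2$. It is excluded at once, since $\frac12+\frac12+\frac1{m_3}>1$ violates hyperbolicity.
\item In your fallback remark the correct lower bound is $\mu\ge\frac14$, not $\frac12$. For example, the signatures $(1;2)$ and $(0;2,2,2,2,2)$ both give $\mu=\frac14$. The bound $\frac14$ still contradicts $\mu<\frac1{4m_r}\le\frac18$, so the conclusion stands.
\end{itemize}
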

\begin{proof}
The hypothesis  $4(\gamma-1) <  \mathfrak{o}(G)$ implies that   $4(\gamma-1) <  |G|$, i.e. $G$ { is a large group of automorphisms of} $\Sigma$. 
Then, from  Breuer \cite{breuer}, Lemma 3.18, it  follows that the signature verifies  $\gamma_0=0$ and $r=3$ or $4$.

\vspace{.1cm}

{\bf Case (i)} {$ r=4$.} 

We show that this case is impossible. The signature is $(0; m_1,m_2,m_3,m_4)$, the smallest orbit satisfies $\mathfrak{o}(G)\hspace{.02cm}m_4= |G|$ and, from
Lemma 3.18, \cite{breuer}, $|G|\leq 12(\gamma-1)$. Therefore

\[
m_4 <\frac{ \mathfrak{o}(G)\hspace{.02cm}m_4}{4(\gamma-1)} = \frac{|G|}{4(\gamma-1)} \leq 3.
\]
Hence, $m_4=2$ and the signature is  $(0; 2,2,2,2)$. Then, Riemann-Hurwitz formula gives $\gamma=1$, a contradiction. 

\vspace{.2cm}

{\bf Case (ii)} {\it $r=3$ and $|G| \geq 24(\gamma-1)$.}

 In this case, Breuer \cite{breuer}, p. 16,  gives a complete list of the corresponding triangular signatures and  the orders of $G$,
\[
\begin{array}{c|c} \text{Signature} & |G| \\
\hline 
\hspace{.01cm}
 (0;2,3,7) & 84(\gamma-1)\hspace{.06cm} \\
 (0;2,3,8) & 48(\gamma-1) \\
   (0;2,4,5) & 40(\gamma-1) \\ 
 (0;2,3,9) & 36(\gamma-1) \\
 (0;2,3,10) &  30 (\gamma-1) \\
  \end{array}
  \hspace{1cm}
  \begin{array}{c|c} \text{Signature} & |G| \\
\hline 
\hspace{.01cm}
 (0;2,3,11)& \hspace{.01cm}(132/5) (\gamma-1)\\
  (0;2,3,12)& 24(\gamma-1)\\ 
   (0;2,4,6)& 24(\gamma-1)\\ 
    (0;3,3,4)& 24(\gamma-1) \\ 
   \textcolor{white}{E} \\
  \end{array}
\]
Since $\mathfrak{o}(G) = \vert{}G\vert{}/m_3$, we have that the only cases that satisfy the hypotheses of  the theorem are $(0;2,3,7)$, $(0;2,3,8)$, 
$(0;2,4,5)$, and $(0;3,3,4)$.

\vspace{.2cm}

------------

\vspace{.2cm}

{\bf Case (iii)} {\it $r=3$  and $|G| <24(\gamma-1)$.}  

The signature is $(0; m_1,m_2,m_3)$, with $2\leq m_1\leq m_2 \leq m_3$, and  the Riemann-Hurwitz formula  (\ref{RH} ) states that
\begin{equation}
\label{rh1}
\frac{1}{m_1}+\frac{1}{m_2}+\frac{1}{m_3}<1 \hspace{.3cm}  {\rm and }  \hspace{.3cm}
|G|=
\frac{2(\gamma-1)}{1-\frac{1}{m_1}-\frac{1}{m_2}-\frac{1}{m_3}}.
\end{equation}
First we establish the following inequalities:

\vspace{-.5cm}

\begin{gather}
\label{222}
\frac{1}{m_1}+\frac{1}{m_2}+\frac{1}{m_3} < \frac{11}{12}, \\[1.5ex]
\label{333}
 \frac{1}{m_1} + \frac{1}{m_2}+ \frac{3}{2m_3} >1.
\end{gather}

\vspace{-.1cm}

Indeed, the assumption $\vert{}G\vert{} < 24(\gamma-1)$  together with (\ref{rh1}), give 
\vspace{-.1cm}
\[
\frac{2}
{1-\frac{1}{m_1}-\frac{1}{m_2}-\frac{1}{m_3}} = \frac{|G|}{\gamma-1} < 24,
\]
which transforms into (\ref{222}).

Similarly, combining (\ref{rh1}) with the assumption on $\mathfrak{o}(G)$, 
\vspace{-.1cm}
\[
4 m_3 < \frac{\mathfrak{o}(G)m_3}{\gamma-1} = \frac{|G|}{\gamma-1} = \frac{2}{1-\frac{1}{m_1}-\frac{1}{m_2}-\frac{1}{m_3}},
\]
\vspace{-.1cm}
and, after rearrangement, we obtain (\ref{333}).   

\vspace{.2cm} 

Finally, subtracting \eqref{222} from \eqref{333}, we obtain $1/(2m_3) > 1/12$, which yields $m_3 \leq 5$, and 
a direct check shows that no triplet of integers $2 \le m_1 \le m_2 \le m_3 \le 5$ simultaneously satisfies both \eqref{222} and \eqref{333}.
Therefore, case (iii) is impossible and this concludes the proof of the theorem.
\end{proof}

\vspace{.1cm}

Hurwitz surfaces are  those with signature $(2,3,7)$.  It is known that each of the four 
cases in the family (\ref{tabla1}) occurs for infinitely many values of $\gamma$ and is impossible for infinitely many others. Futhermore, the genus and order of the group do not uniquely determine the Riemann surface; for instance, there are three distinct Hurwitz surfaces of genus $\gamma=14$.

\vspace{.2cm}

The following table lists the compact Riemann surfaces of the family $\mathfrak{o}(G)>4(\gamma-1)$
with the smallest genera, as provided by  Magaard, Shaska,  Shpectorov and  V\"olklein \cite{shaska2}:

\begin{table}[H]

\centering

\begin{tabular}{clccccll}


$\gamma$ &  Riemann surface & Curve equation  & $(p,q,r)$ & Group ID & References\\ 

 \hline\vspace{-.3cm}\\ 

2 & Bolza\ curve  & $y^2=x(x^4-1)$  & $(2,3,8)$ & $(48,29)$ & \cite{nayatani} \vspace{.1cm}\\

3 & Klein\ quartic\ & $x^3 y + y^3 z + z^3 x=0$  & $(2,3,7)$ & $(168,42)$  & \cite{karcher}  \vspace{.1cm}\\

3 & Fermat\ quartic & $x^4+ y^4+z^4=0$ &  $(2,3,8)$ & $(96,64)$ & \cite{karcher}

\vspace{.1cm}\\

4 & Bring curve &  


\ in  $\CP^3$  

& $(2,4,5)$ & $(120,34)$ & \cite{bring}   

\\

2 & Bolza\ curve & & $(3,3,4)$ & $(24,3) $ & \cite{bujalancecc}  

\\

3 & Fernat quartic &  & $(3,3,4)$  &  \ $(48,3)$  & \cite{bujalancecc}  

\end{tabular}

\centering
\vspace{.2cm}
\caption{Compact Riemann surfaces  with genus $\gamma\leq 4$ satisfying  $\mathfrak{o}(G) >4(\gamma-1)$.
\label{family3} Bolza curve is the unique hyperelliptic surface in the family. The next 
ones are $G$-equivariant unbranched curves in the complex projective space. 
Genus three  surfaces are 
quartic curves in  $\CP^2$ and   
the Bring curve of genus $4$ in  $\CP^3$  is the complete intersection of a quadric and a cubic surface.
The automorphism group is given according to the
GAP Small Groups Library, \cite{tim,gap,shaska2}. 
The first number is the order of the group and the second one represent the specific position of that group 
within the catalog of all non-isomorphic groups of that same order.
For the last two  surfaces, see Remark \ref{bu}. 
}
\end{table}
\begin{remark}
\label{bu}
Any surface $(\Sigma,G)$ of genus $\gamma\geq 2$ with signature $(0;2,3,8)$ has a unique subgroup $H\leq G$ of index $2$, and the signature of $(\Sigma,H)$ is $(0;3,3,4)$ (see Bujalance, Cirre and Conder~\cite{bujalancecc}). However, not every surface with signature $(0;3,3,4)$ arises in this way. For instance, for $\gamma=10$, there is exactly one topological group action with signature $(0;2,3,8)$ but two distinct cases with signature $(0;3,3,4)$ (see Broughton et al. \cite{broughton1}, p. 51). 
\end{remark}
\begin{remark}
Although many, but not all, surfaces with $\mathfrak{o}(G) >4(\gamma-1)$ admit antiholomorphic transformations,
 in this paper we focuses exclusively on holomorphic ones.  
For the antiholomorphic case see Singerman \cite{singerman}.
\end{remark}

\vspace{.1cm}

We will also need to control the transformation group of surfaces with boundary.
If ${\Sigma}$ is a compact Riemann surface with a nonempty boundary, the order of its automorphism group can also be bounded. 
According to Oikawa \cite{oikawa}, if  
 $\gamma\geq 0$ and $r\geq 1$ denote  the genus and the number of boundary components of $\Sigma$, the Euler characteristic is 
 $\chi(\Sigma)=2-2\gamma-r$.
 Except for the disk $(\gamma,r)=(0,1)$ and the annulus $ (0,2)$, the group of holomorphic automorphisms is finite and satisfies:
\begin{equation}
\label{borde}
|Aut(\Sigma)|\leq \min \big\{12(\gamma-1) + 6r,\  84(\gamma-1)\big\}.
\end{equation}

\vspace{.2cm}

\section{First eigenfunctions of the Laplacian }

Let  $\Sigma$ be a closed Riemann surface. In this section, we consider $(\Sigma,ds^2)$
 to be an unbranched conformal Riemannian metric. For any holomorphic coordinate $(U,z)$, there is a smooth function 
 $\varphi$ on $U\subset \C$ such that $ds^2= e^{2\varphi}|dz|^2$. 

The subspace of first eigenfunctions of the Laplacian is given by
\[
V_1(ds^2) = \big\{ u\in C^\infty(\Sigma) \ \big| \ \Delta u + \lambda_1 u=0 \big\}.
\]
The {\it nodal} $1$-cycle of a nonzero eigenfunction $u$ is  $N(u)=\{p\in \Sigma \ / \ u(p)=0\}$.

For any  $p \in N(u)$, in holomorphic coordinate $z=x+iy$ centered at $p$, 
\[
u(x,y)=P_m(x,y)+ O(r^{m+1}),  \hspace{.4cm}  r=\sqrt{x^2+y^2}<\varepsilon,
\]
where $P_m$ is a non-zero harmonic polynomial of degree $m \geq 1$. 

The  {\it order  of vanishing}  of $u$ at the point $p$ is $m$ 
and the nodal cycle has a finite number of {\it vertices}, i.e.,
points $p_1,\ldots, p_V \in N(u)$, 
 with order of vanishing  $m_j \geq 2 $ for $j=1,\ldots, V$. 

Around a vertex $p_j$ the nodal $1$-cycle consists of a $C^1$ equiangular system of $m_j$ curves.

Finally, the Courant nodal domain theorem implies that the {\it nodal domains} $D_1=\{ u> 0\}$ and $D_2 = \{ u < 0\}$ are open connected  subsets of $\Sigma$ with piecewise smooth boundaries, \cite{cheng}.

The following result relates the topology of the surfaces to the vertices of the nodal cycle $N(u)$; Cheng \cite{cheng}, Besson \cite{besson}, see also \cite{choe}, \cite{vergasta}.

\begin{lemma}

\label{choe}

Let $ds^2$ be a conformal metric on a compact Riemann surface $\Sigma$ of genus $\gamma \geq 0$ and $u:\Sigma\longrightarrow \R$ be a nonzero first eigenfunction of the Laplacian. 
Then, the orders of the zeros of $u$ at the vertices satisfy  
\[
\sum_{j=1}^V (m_j -1) \leq 2\gamma,
\]
and equality holds if and only if the two nodal domains are disks.
\end{lemma}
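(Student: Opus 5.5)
The plan is an Euler characteristic count on the nodal graph, combined with Courant's theorem as recalled above: a first eigenfunction $u$ has exactly two nodal domains $D_1=\{u>0\}$ and $D_2=\{u<0\}$. We view $N(u)$ as a graph embedded in $\Sigma$. Its vertices are the points $p_1,\dots,p_V$ where the order of vanishing is $m_j\ge 2$. Near such a point $N(u)$ consists of $m_j$ smooth arcs through $p_j$, hence $2m_j$ half-edges. Away from the vertices $N(u)$ is a union of smooth arcs and possibly closed curves. On each closed curve without vertices we insert an auxiliary vertex of degree $2$; this changes neither $V-E$ nor the sum, since it contributes $m-1=0$. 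Then $N(u)$ becomes a finite graph with $V'$ vertices and $E$ edges, and the degree count gives
\[
2E=\sum_{j}2m_j, \qquad\text{so}\qquad E-V'=\sum_j (m_j-1).
\]

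Next we decompose $\Sigma=D_1\cup N(u)\cup D_2$. The additivity of the Euler characteristic for this decomposition gives
\[
2-2\gamma=\chi(\Sigma)=\chi(N(u))+\chi(D_1)+\chi(D_2),
\]
where $\chi(D_i)$ denotes the Euler characteristic of the open domains. One justification is to triangulate compatibly with the graph, which is possible because the edges are $C^1$ and meet equiangularly at the vertices. Alternatively, one works with the compact surfaces-with-boundary $\overline{D_i}$ cut open along $N(u)$. Since $\chi(N(u))=V'-E$, this yields
\[
\sum_j (m_j-1)=\chi(D_1)+\chi(D_2)-2+2\gamma .
\]

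Each $D_i$ is a connected open surface whose ends are bounded by the nodal graph, so it is homeomorphic to the interior of a compact connected surface of genus $\gamma_i$ with $r_i\ge1$ boundary components. Hence $\chi(D_i)=2-2\gamma_i-r_i\le 1$, with equality exactly when $D_i$ is a disk. Summing over $i=1,2$ gives $\sum_j(m_j-1)\le 2\gamma$, and equality holds iff both $\chi(D_i)=1$, i.e.\ iff both nodal domains are disks.

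The main obstacle is the rigorous justification of the topological decomposition near the vertices. One must check that each $D_i$ really is the interior of a compact surface with boundary, so that the formula $\chi=2-2\gamma_i-r_i$ applies, and that the Euler characteristic is additive for this decomposition. The first thing to try is the local structure $u\approx P_m$ with $P_m$ harmonic, which shows that near $p_j$ the zero set is $C^1$-diffeomorphic to $m_j$ lines through the origin and the sectors alternate in sign. One then cuts $\Sigma$ along $N(u)$ and doubles the half-edges, so each $\overline{D_i}$ becomes a compact surface whose boundary circles are made of edges. Finally one counts vertices, edges and faces after adding a triangulation of each piece.
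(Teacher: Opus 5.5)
Your proof is correct, but it obtains the key identity by a different route than the paper.

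The paper proceeds geometrically. It applies Gauss--Bonnet with corners to each nodal domain and lets the geodesic-curvature integrals cancel across $N(u)$. It evaluates the total exterior angle at a vertex as $2m_j(\pi-\pi/m_j)=2\pi(m_j-1)$. Using $\int_\Sigma K\,dA=2\pi(2-2\gamma)$, it arrives at $2-2\gamma+\sum_j(m_j-1)=\chi(D_1)+\chi(D_2)$.

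You reach exactly this relation combinatorially. The half-edge count $E-V'=\sum_j(m_j-1)$ replaces the angle computation, and additivity of $\chi$ over $\Sigma=D_1\sqcup N(u)\sqcup D_2$ replaces Gauss--Bonnet. Your version is metric-free: it uses only the number of arcs through each vertex, with no curvature, geodesic curvature or corner angles. The paper's version is quicker if one takes Gauss--Bonnet for piecewise smooth domains as known.

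Both arguments rest on the same topological input, which you correctly flag as the delicate point and which the paper uses tacitly. Each $D_i$ must be modeled by the compact surface obtained by cutting $\Sigma$ along $N(u)$. The closure $\overline{D_i}$ itself is not a manifold with boundary at a vertex where several sectors of $D_i$ meet. With this model, $\chi(D_i)=2-2\gamma_i-r_i\le 1$ with $r_i\ge 1$, and equality holds iff $D_i$ is a disk.

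In your cell count, the additive quantity is the alternating count of open cells inside $D_i$. This equals $\chi$ of the cut-open surface, because its boundary circles contribute $\chi=0$. So your two readings of $\chi(D_i)$, open domain versus cut-open surface, agree and the formula is consistent.
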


\begin{proof}
Using the Gauss-Bonnet theorem for a domain with  piecewise smooth boundary we obtain 
\[
\int_{D_i} K dA + \int_{\partial D_i} \kappa_g(s)ds  + \sum_{j} \theta_j = 2\pi \chi(D_i), 
\]
where $K$ is Gaussian curvature, $\kappa_g$ the geodesic curvature along  $\partial D_i$, and $\sum_j\theta_j$ is the sum of all the exterior angles at the vertices of the nodal domain $D_i$, $i=1,2$. 

\vspace{.2cm}

The proof  of the lemma is obtained 
by adding the above equation for $i=1,2$  and taking into account the following points:
\begin{itemize}
\item[$\cdot$] The geodesic curvature takes opposite values on the two sides of the nodal 1-cycle  $N(u)$ and, thus,  
its integral cancels out.
\item[$\cdot$]  The sum of exterior angles at a vertex $p_j$ of $N(u)$ is equal to $2\pi \big( m_j -1\big)$. 
\item[$\cdot$]  The nodal domains $D_i$ are connected and, so, $\chi(D_i)\leq 1$, $i=1,2$.
\item[$\cdot$]  The Gauss-Bonnet, again, gives  $\int_{\Sigma} K=2\pi(2-2g)$.
\end{itemize}
Therefore
\[
2\pi(2-2\gamma) + 2\pi \sum_j (m_j-1)  =\int_\Sigma K \ dA + 2\pi \sum_j (m_j-1) = 2\pi \big( \chi(D_1) +\chi(D_2) \big) \leq 4\pi.
\]
\end{proof}

Let $m_1(\gamma)$ be the maximum of the  dimensions $m_1(ds^2)=\dim V_1(ds^2)$ of the first eigenspace of the Laplacian among all Riemannian metrics on a compact orientable surfaces of genus $\gamma$.
Using Lemma \ref{choe}, the sharp bounds $m_1(0)=3$ and $m_1(1)=6$ 
are obtained in \cite{cheng} and \cite{besson}, respectively.

\vspace{.1cm}

For $\gamma\geq 2$, the best known estimate uses a different argument, Sevennec \cite{sevennec},
\begin{equation}
\label{sevennec1}
m_1(\gamma) \leq 2\gamma+3.
\end{equation}

\vspace{.1cm}

\subsection{Group Representations on the First  Eigenspace} \textcolor{white}{.}

\vspace{.1cm}

If the metric $ds^2$ is invariant under the finite group of holomorphic transformations on the compact Riemann surface
 $\Sigma$, $G \leq \text{Aut}(\Sigma)$,  then the first eigenspace $V_1(ds^2)$ is preserved by the action of $G$. 
 
For any irreducible orthogonal representation
  $\rho: G \longrightarrow O(m)$ of dimension $\dim \hspace{.005cm} \rho=m$,  we denote by $V_1(ds^2,\rho)$ the corresponding 
component of the first eigenspace, 
\[
V_1(ds^2,\rho) = \{u \in V_1(ds^2) \, \big| \, u \circ \phi^{-1} = \rho(\phi)u, \;\forall \phi \in G\},
\]
whose  dimension is $i \times m$, for some $i=0,1,2,\ldots$

\vspace{.1cm}

If  $\{\rho_1, \dots, \rho_k\}$ is the finite set of irreducible orthogonal representations of $G$, then the space of the first eigenfunctions decomposes into the direct sum
\[
V_{1}(ds^2)= V_1(ds^2,\rho_1) \oplus \cdots \oplus V_1(ds^2,\rho_k).
\]
The {\it trivial representation} $\rho_1: G \longrightarrow O(1)$ corresponds to the trivial homomorphism $\rho_1(\phi) = 1$, 
$\forall \phi \in G$, and the subspace $V_1(ds^2,\rho_1)$ is given by the invariant first eigenfunctions,  $u\circ\phi=u$ for all isometries of $G$.
As an illustrative example, if the group $G=\{1, \phi\}$ is given by an involutive isometry $\phi$, 
then we have two irreducible representations $\{\rho_1,\rho_2\}$, and the direct sum $V_1(ds^2)=V_1(ds^2,\rho_1)\oplus V_1(ds^2,\rho_2)$ corresponds to the even and odd eigenfunctions, respectively.

\begin{theorem}
\label{cota}
Let $(\Sigma,G)$ be  a compact Riemann surface of genus $\gamma\geq 2$ with  a  group of holomorphic automorphisms $G$ and let $ds^2$ be  a $G$-invariant conformal metric on $\Sigma$. 
\mbox{If \hspace{.01cm} $|G|>12g$ and  $\mathfrak{o}(G)>2\gamma$, } 
then the invariant subspace  
corresponding to trivial representation $\rho_1$ of $G$ satisfies $V_1(ds^2, \rho_1)=0$. 
\end{theorem}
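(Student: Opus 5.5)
Suppose, for contradiction, that $u\neq 0$ is a $G$-invariant first eigenfunction, i.e. $u\circ\phi=u$ for all $\phi\in G$. The plan is to show that the nodal $1$-cycle $N(u)$ is then forced to carry too many vertices, or else its nodal domains are too symmetric to exist. Both conclusions will contradict Lemma \ref{choe} or the bound (\ref{borde}) of Oikawa. Note first that $N(u)$, $D_1=\{u>0\}$ and $D_2=\{u<0\}$ are all $G$-invariant. Since $u$ is $L^2$-orthogonal to constants, both $D_1$ and $D_2$ are nonempty. By Courant's theorem they are connected.

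\textbf{Step 1: vertices.} The vertex set of $N(u)$ is $G$-invariant, so it is a union of $G$-orbits. A generic orbit has $|G|>12\gamma>2\gamma$ points, and a nongeneric orbit has at least $\mathfrak{o}(G)>2\gamma$ points. Each vertex contributes at least $m_j-1\geq 1$ to the sum in Lemma \ref{choe}, which is at most $2\gamma$. Hence $N(u)$ has no vertices at all. Consequently $N(u)$ is a finite disjoint union of smooth embedded closed curves, and the total exterior angle term in the Gauss--Bonnet argument vanishes. The equality in the proof of Lemma \ref{choe} then reads $\chi(D_1)+\chi(D_2)=2-2\gamma$.

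\textbf{Step 2: symmetry of a nodal domain.} Each $D_i$ is a connected $G$-invariant compact surface with nonempty boundary, so $G$ acts on $\bar D_i$ by holomorphic automorphisms. I expect the action to be effective: a nontrivial automorphism of $\Sigma$ fixing an open set is the identity. Because $\chi(D_1)+\chi(D_2)=2-2\gamma<0$, at least one domain, say $D_1$, has $\chi(D_1)<0$, so it is neither a disk nor an annulus. Write $\gamma_1$ for its genus and $r_1\geq1$ for its number of boundary components. Oikawa's bound (\ref{borde}) gives $|G|\leq 12(\gamma_1-1)+6r_1$.

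\textbf{Step 3: counting.} Gluing $D_1$ and $D_2$ along $N(u)$ gives $\gamma=\gamma_1+\gamma_2+r-1$, where $r$ is the number of components of $N(u)$. The components of $N(u)$ are the boundary curves of either domain, so $r=r_1=r_2$. This yields $\gamma_1\leq\gamma-r+1$. Then
\[
|G|\leq 12(\gamma-r)+6r=12\gamma-6r<12\gamma,
\]
which contradicts $|G|>12\gamma$.

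A remaining degenerate case is when the other domain $D_2$ is a disk or an annulus. That is harmless, since the bound is applied only to $D_1$. If both domains are disks or annuli, then $\chi(D_1)+\chi(D_2)\geq 0$, which is impossible for $\gamma\geq2$.

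The main obstacle is Step 2. It requires justifying that the restricted action of $G$ on $\bar D_1$ is faithful and holomorphic, so that (\ref{borde}) applies. It also requires the bookkeeping identity $r_1=r_2=r$, which I will obtain from the absence of vertices established in Step 1.
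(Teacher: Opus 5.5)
Your proof is correct and follows essentially the same route as the paper: you exclude vertices of $N(u)$ via Lemma \ref{choe} and the orbit-size hypothesis, then combine the Euler characteristic count $\gamma_1+\gamma_2+r=\gamma+1$ with Oikawa's bound \eqref{borde}. The only difference is that you apply \eqref{borde} once, to a domain with $\chi<0$, and use $\gamma_1\le\gamma+1-r$ to get $|G|\le 12\gamma-6r$ directly; this replaces the paper's two-case split (summing the bounds over both domains, or using $r\le 2$ and $\gamma_1\le\gamma$ when the other domain is a disk or annulus).
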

\begin{proof}
 Suppose that there is nonzero eigenfunction $u \in V_1(ds^2, \rho_1)$. 
Then $u$  is invariant for all transformations of $G$ and 
the nodal cycle $N(u) = \{ p \in \Sigma / u(p)=0 \}$ is the union of a number of immersed curves intersecting in a equiangular way at its vertices.

\vspace{.1cm}

 $N(u)$ divides $\Sigma$ in two connected nodal domains $D_1 = \{u>0\}$ and \mbox{$D_2=\{ u<0\}$} with  piecewise smooth boundary.
If $N(u)$ has any singular point then the set of vertices of $N(u)$ contains an orbit of $G$ and lemma \ref{choe} gives
  \[
 \mathfrak{o}(G) \leq \sum_k (m_k-1) \leq 2\gamma,
 \]
 a contradiction. Therefore $N(u)$ has no vertices.

  \vspace{.2cm}

  Since the nodal cycle has no vertices, $N(u)$ is a disjoint union of $r\geq 1$ embedded smooth closed curves. Then 
  $\chi(\Sigma)= \chi(D_1)+ \chi(D_2)$ and,  
  thus, we have 
$
2-2\gamma =  (2 - 2\gamma_1 -r)+(2-2\gamma_2-r) 
$,
which simplifies to
$
\gamma_1+\gamma_2 +r= \gamma+1.
$

\vspace{.2cm}

Now we consider the two following cases:

\vspace{.1cm}

(1) {\it The two nodal domains verify the assumption of  Oikawa result (\ref{borde}).}

 Then 
\mbox{$
|G| \leq |Aut(D_i) |\leq $} $12(\gamma_i-1)+6r 
$, $i=1,2$. 
Summing in $i$ we get
\[
2|G| \leq 12(\gamma_1+\gamma_2) +12r-24 = 12(\gamma+1)-24 = 12(\gamma-1),
\]
which is impossible as we are supposing that $|G|>12\gamma$.

\vspace{.2cm}

(2) {\it One of the domains, say $D_2$, does not verify this assumption.}

Then $r\leq 2$, $D_1$ verifies (\ref{borde}) and, using that $\gamma_1\leq \gamma$, we obtain 
\[
12 \gamma < |G| \leq|Aut (D_1 )|
\leq 12(\gamma_1-1)+6\times 2 \leq  12\gamma,
\]
a contradiccion. 
\end{proof}

As a direct consequence of Theorems \ref{o(G)} and \ref{cota}, we obtain the following result.

\begin{corollary}
\label{chi1} 
Let $\Sigma$ be a closed Riemann surface of genus $\gamma \geq 3$ with an automorphism group $G$ with 
triangular signature $(0; 2,3,7)$, $(0; 2,3,8)$, $(0; 2,4,5)$ or $(0; 3,3,4)$. Then, for any $G$-invariant conformal metric $ds^2$, the subspace of $G$-invariant first eigenfunctions is $V_1(ds^2,\rho_1)=\{0\}$.
\end{corollary}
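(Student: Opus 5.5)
The plan is to apply Theorem \ref{cota} directly. For each of the four signatures, with $\gamma\geq 3$, I will check its two numerical hypotheses:
\[
|G|>12\gamma \qquad\text{and}\qquad \mathfrak{o}(G)>2\gamma .
\]
Theorem \ref{cota} then gives $V_1(ds^2,\rho_1)=\{0\}$ for every $G$-invariant conformal metric $ds^2$. No further geometric input is needed, because the nodal-domain and Oikawa arguments are already contained in Theorem \ref{cota}.

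First I determine $|G|$ and $\mathfrak{o}(G)$ in terms of $\gamma$. For a triangular signature $(0;m_1,m_2,m_3)$, the Riemann--Hurwitz formula (\ref{rh1}) gives
\[
|G|=\frac{2(\gamma-1)}{1-\frac1{m_1}-\frac1{m_2}-\frac1{m_3}} .
\]
Together with $\mathfrak{o}(G)=|G|/m_3$, this reproduces the values in table (\ref{tabla1}) of Theorem \ref{o(G)}:
\[
(84,12),\quad (48,6),\quad (40,8),\quad (24,6),
\]
listed as the coefficients of $(\gamma-1)$ in $(|G|,\mathfrak{o}(G))$ for $(0;2,3,7)$, $(0;2,3,8)$, $(0;2,4,5)$ and $(0;3,3,4)$ respectively.

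Next I verify the inequalities case by case; each is linear in $\gamma$.
\begin{itemize}
\item For $(0;2,3,7)$: $84(\gamma-1)>12\gamma$ is equivalent to $72\gamma>84$, and $12(\gamma-1)>2\gamma$ is equivalent to $10\gamma>12$.
\item For $(0;2,3,8)$: $48(\gamma-1)>12\gamma$ is equivalent to $36\gamma>48$, and $6(\gamma-1)>2\gamma$ is equivalent to $4\gamma>6$.
\item For $(0;2,4,5)$: $40(\gamma-1)>12\gamma$ is equivalent to $28\gamma>40$, and $8(\gamma-1)>2\gamma$ is equivalent to $6\gamma>8$.
\item For $(0;3,3,4)$: $24(\gamma-1)>12\gamma$ is equivalent to $12\gamma>24$, that is $\gamma>2$, and $6(\gamma-1)>2\gamma$ is equivalent to $\gamma>3/2$.
\end{itemize}
All of these hold for $\gamma\geq 3$.

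The only delicate point is the order condition for the signature $(0;3,3,4)$. It is exactly where the hypothesis $\gamma\geq 3$, rather than $\gamma\geq 2$, is needed. Indeed, for $\gamma=2$ one gets $|G|=24=12\gamma$, so the strict inequality in Theorem \ref{cota} fails. This is consistent with the Bolza curve entry in Table \ref{family3} being excluded. Once this is noted, the corollary follows immediately from Theorem \ref{cota}.
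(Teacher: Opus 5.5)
Your proof is correct and follows the paper's route: the paper presents the corollary as a direct consequence of Theorem \ref{o(G)}, which supplies $|G|$ and $\mathfrak{o}(G)$, and Theorem \ref{cota}, and your case-by-case check of $|G|>12\gamma$ and $\mathfrak{o}(G)>2\gamma$ is exactly that deduction made explicit. Your observation is also accurate that $\gamma\geq 3$ is needed only for the order condition in the $(0;3,3,4)$ case.
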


\section{Equivariant spherical harmonic maps and  minimal surfaces } 
\label{harmonicmaps}

Let $\Sigma$ be a compact Riemann surface and $G$ a finite group of holomorphic automorphisms of $\Sigma$. 
A \textit{ (real) orthogonal representation} is a group homomorphism \mbox{$\rho: G \longrightarrow O(n+1)$}  from $G$ to the orthogonal group.
This representation naturally acts on the Euclidean space $\mathbb{R}^{n+1}$. 
We say that a map $f: \Sigma \longrightarrow \mathbb{R}^{n+1}$ is $\rho$-\textit{equivariant} if
\[
f(\phi \cdot p) = \rho(\phi) f(p) \quad \forall \phi \in G \text{ and } p \in \Sigma,
\] 
and that $f$ is $G$-\textit{equivariant} if it is $\rho$\hspace{.1cm}-equivariant for some orthogonal representation $\rho$.

\vspace{.1cm}

The representation  $\rho$ is \textit{irreducible} 
if the only $\rho$\hspace{.1cm}-invariant vector subspaces of $\mathbb{R}^{n+1}$ are the trivial ones; that is, if a subspace $V \subset \mathbb{R}^{n+1}$ satisfies 
\[
\rho(\phi)V \hspace{-.04cm}\subset V,  \ \forall \phi \in G, 
\ \ {\rm then} \ \ V = \{0\} \  {\rm or} \  V = \mathbb{R}^{n+1}. 
\]

\vspace{.2cm}

\subsection{Harmonic maps.}   \textcolor{white}{.}

\vspace{.1cm}

The energy of a differentiable map $f$ from $\Sigma$ to a sphere is given by  
\[
E(f) =  \frac{1}{2}\int_\Sigma |\nabla f|^2 \, dA,
\]
where the gradient, $|\nabla f|^2$ and $dA$, are defined with respect to 
an arbitrary conformal metric $ds^2$. 

A {\it harmonic map} $f: \Sigma \longrightarrow S^n(1) \subset \mathbb{R}^{n+1}$ is a smooth map that, in terms of 
$ds^{2}$, satisfies the vector equation
$
\Delta f + |\nabla f|^2 f = 0
$.
Any weakly differentiable map that is a critical point of the energy functional is a smooth harmonic map,  Hélein \cite{helein}.
 

\vspace{.2cm}

For  any  nonconstant  harmonic map $f:\Sigma\longrightarrow S^n(1)\subset \R^{n+1}$,  the metric  
 \[
 ds^2_f=  {\frac{1}{2}}
 |\nabla f|^2   ds^2,
\]
is a conformal  metric with a finite number of conical singularities and the lineal functions of $f$ are eigenfunctions of $ds^2_f$ of eigenvalue $\lambda =2$,
 Karpukhin, Nadirashvili, Penskoi and Polterovich \cite{knpp}.

\vspace{.2cm}

The {\it normalized first eigenvalue} of $ds^2$ is defined as $\bar{\lambda}_1(ds^2)= \lambda_1(ds^2)Area(ds^2)$,
and if  we consider the supremum 
\begin{equation}
\label{conformal}
\Lambda_1(\Sigma) = \sup \big\{ \bar{\lambda}_1(dt^2) \ \big| \ dt^2 \text{ is a conformal metric on } \Sigma \big\},
\end{equation}
then there exists a conformal metric $ds^2$ with conical singularities that maximizes the functional, 
$\bar{\lambda}_1(ds^2) = \Lambda_1(\Sigma)$, see  Petrides \cite{petrides},  \cite{knpp} and Karpukhin and Stern \cite{karpukhin-stern}.
This extremal metric gives rise to a harmonic map  $f: \Sigma \longrightarrow S^n(1)\subset \R^{n+1}$, with $ds^2= ds^2_f$, parametrized by  first eigenfunctions of $ds^2_f$.

\vspace{.1cm}

Given a finite group of automorphisms  $G$ on a compact Riemann surface $\Sigma$,   
 for weakly differentiable \mbox{$G$-equivariant} harmonic maps 
$f:\Sigma\longrightarrow S^n(1)$, 
we have the regularity of these maps away from the fixed points of $G$, \cite{helein}, and 
 Sacks and Uhlenbeck \cite{SU} proved that any harmonic map of finite energy on a Riemann surface, which is regular except possibly at isolated points, admits a smooth harmonic extension to the entire surface.

\subsection{{Hopf differentials.}} \textcolor{white}{.}

\vspace{.1cm}

For any harmonic map $f:\Sigma\longrightarrow S^n(1)$, the {\it first Hopf differential} is 
the quadratic differential $\Omega_1(f)=\langle f_z,f_z\rangle dz^2$,  where $z$ is a local holomorphic coordinate. This differential is 
 holomorphic. 

\vspace{.1cm}

If $f$ is nonconstant, 
then  $\Omega_1(f)$ vanishes 
if and only if $f$ is a branched minimal immersion, i.e. a conformal harmonic map. 
 In this case, the quartic {\it Hopf differential} 
$\Omega_2(f)= \langle f_{zz},f_{zz}\rangle dz^4$ is holomorphic. 
If $\Omega_2\equiv 0$, then the next {\it Hopf differential} 
$\Omega_3= \langle f_{zzz},f_{zzz}\rangle dz^6$ is holomorphic and the process  continues with the sucesive
$2k$-diferentials  $\Omega_k$,
 until we arrive (if it happens)  to the first non zero (and necessarily holomorphic) 
 one, \cite{lawson1}, 
  \cite{vlachos}. 

 These objects play an important role in the theory of minimal surfaces of the sphere and a number of relevant results depend to its behavior,
Bolton, Pedit and Woodward \cite{pedit},   Miyaoka \cite{miyaoka},  Bryant \cite{bryant}, Eschenburg and Vlachos
  \cite{eschen-vlachos, vlachos}.

\vspace{.2cm}

For $G$-equivariant harmonic maps with smallest orbit of length $\mathfrak{o}(G)$ we have the following result.

\begin{lemma} 
\label{hopf}
 Let $\Sigma$ a closed Riemann surface of genus $\gamma\geq 2$,
and $f\hspace{-.1cm}:\hspace{-.1cm}\Sigma\longrightarrow S^n(1)$ be a $G$-equivariant harmonic map.
\begin{itemize}
\item[(i)] If 
  $\mathfrak{o}(G)> 4(\gamma-1)$, then $f$ is a branched conformal minimal surface.
 \item[(ii)]  If  
  $\mathfrak{o}(G)>8(\gamma-1)$, then the 
 Hopf differential $\Omega_2=\langle f_{zz},f_{zz}\rangle dz^4 $ is equal to zero.
  \end{itemize}
  \label{hopfminimal}
\end{lemma}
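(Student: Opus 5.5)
The plan is to show that the relevant holomorphic differentials are $G$-invariant and to count their zeros. A nonzero holomorphic $k$-differential on a closed surface of genus $\gamma$ has exactly $2k(\gamma-1)$ zeros, counted with multiplicity. Its zero set would have to be a union of $G$-orbits, and each orbit has at least $\mathfrak{o}(G)$ points. That forces the differential to vanish.

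\emph{Invariance.} Let $\phi\in G$ and set $A=\rho(\phi)\in O(n+1)$, so that $f\circ\phi=Af$. Take a local holomorphic coordinate $w=\phi(z)$. Then $(f\circ\phi)_z=f_w(\phi(z))\,\phi'(z)$, and since $A$ is orthogonal,
\[
\langle f_w,f_w\rangle(\phi(z))\,\phi'(z)^2=\langle Af_z,Af_z\rangle=\langle f_z,f_z\rangle .
\]
This says exactly that $\phi^*\Omega_1(f)=\Omega_1(f)$. For $\Omega_2$ we first need $f$ conformal, i.e. $\Omega_1=0$, which is part (i). In that case we use the invariant form $\Omega_2=\langle \nabla_{\partial_z}f_z,\nabla_{\partial_z}f_z\rangle dz^4$, where $\nabla$ is the normal (or sphere) covariant derivative. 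This is the standard coordinate-independent definition in which $\Omega_2$ is holomorphic. Equivalently, the terms with $\phi''$ drop out because $\langle f_z,f_z\rangle=\langle f_z,f_{zz}\rangle=0$ and $\langle f,f_{zz}\rangle=-\langle f_z,f_{\bar z}\rangle$ pairs against $f_{zz}$ only through already vanishing quantities. The same orthogonality computation then gives $\phi^*\Omega_2=\Omega_2$. Consequently, if $\Omega_k\neq 0$, its divisor of zeros $Z$ is $G$-invariant, and multiplicities are constant along orbits.

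\emph{Counting.} By Riemann--Roch, or by Gauss--Bonnet for $K^k$, we have $\deg Z=2k(\gamma-1)$. Since $\gamma\ge2$, this degree is positive, so $Z\neq\emptyset$. We cannot have $Z$ empty with $\Omega_k\ne0$, because a nowhere-vanishing section of $K^k$ would force the degree to be $0$. Hence $Z$ contains at least one full $G$-orbit, and
\[
\mathfrak{o}(G)\le \#Z\le \deg Z=2k(\gamma-1).
\]
For $k=1$ this reads $\mathfrak{o}(G)\le 4(\gamma-1)$, which contradicts the hypothesis of (i). So $\Omega_1\equiv0$, and a nonconstant conformal harmonic map is a branched minimal immersion. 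If $f$ is constant the statement is trivial, or one excludes that case: equivariance would force the constant to be a fixed vector of $\rho$. For (ii), part (i) applies since $8(\gamma-1)>4(\gamma-1)$, so $\Omega_2$ is a holomorphic quartic differential. The count gives $\mathfrak{o}(G)\le 8(\gamma-1)$ if $\Omega_2\ne0$, which contradicts the hypothesis, so $\Omega_2=0$.

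\emph{Main obstacle.} The delicate step is the invariance of $\Omega_2$ under $\phi$ when $\phi$ acts nonlinearly in coordinates. The naive expression $\langle f_{zz},f_{zz}\rangle$ picks up extra terms involving $\phi''$. I would handle this by verifying directly that every cross term contains a factor $\langle f_z,f_z\rangle$ or $\langle f_z,f_{zz}\rangle=\tfrac12\partial_z\langle f_z,f_z\rangle$, both of which vanish once $f$ is conformal. Branch points cause no trouble, since $\Omega_2$ is still a global holomorphic section of $K^4$ and the count above uses only its degree.
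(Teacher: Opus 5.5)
Your proposal is correct and is essentially the paper's argument: equivariance makes the zero divisor of $\Omega_1$ (and, once $f$ is conformal, of $\Omega_2$) a union of $G$-orbits, each of size at least $\mathfrak{o}(G)$, while a nonzero holomorphic differential of this kind has only $4(\gamma-1)$, resp.\ $8(\gamma-1)$, zeros counted with multiplicity; you also write out the invariance computation that the paper leaves implicit. There is one small bookkeeping slip: $\Omega_k$ is a section of $K^{2k}$, so its degree is $4k(\gamma-1)$, not the $2k(\gamma-1)$ you write for ``$K^k$'' (your final bounds are nevertheless the right ones), and in your aside $\langle f,f_{zz}\rangle=-\langle f_z,f_z\rangle$, not $-\langle f_z,f_{\bar z}\rangle$.
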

\begin{proof}
 As $f$ is equivariant, holomorphic tranformations in $G$ 
preserve the set of zeroes of  $\Omega_k$. Then  the result follows because  a non zero $2k$-holomorphic differential on $\Sigma$ is annulled  just 
at $4k(\gamma-1)$ points, counted with multiplicity, \cite{farkas}.   
\end{proof}    

\vspace{.15cm}

\subsection{Branched minimal surfaces.}
\vspace{.1cm}
The study of branch points of minimal surfaces is a classical and interesting subject, Gulliver and Osserman  \cite {osserman}, Eschenburg and Tribuzy  \cite{tribuzy}, Micallef and White \cite{micallef--white}.

By adapting  some of these results to our situation, we conclude the following;

\begin{theorem} 
\label{unbranched}
{\it Let $f:\Sigma\longrightarrow S^n(1)$ be a $G$-equivariant branched conformal minimal surface. If
 \begin{equation}
 Area(f) <2\pi \big( \mathfrak{o}(G) - 2(\gamma-1) \big)
 \label{branching}
 \end{equation} 
 then $f$ is unbranched.}
 \end{theorem}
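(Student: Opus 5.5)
We argue by contradiction: assume $f$ has a branch point $p\in\Sigma$ and show that the area must then be at least $2\pi\big(\mathfrak{o}(G)-2(\gamma-1)\big)$. Equivariance, $f(\phi\cdot p)=\rho(\phi)f(p)$, means each $\phi\in G$ is an isometry of the induced metric $ds^2=\langle df,df\rangle$, since $\rho(\phi)$ is orthogonal. So the branch locus, together with the branching orders, is $G$-invariant. Hence if $p$ is a branch point of order $k_p\geq 1$, every point of $G\cdot p$ is a branch point of the same order, and there are at least $|G\cdot p|\geq \mathfrak{o}(G)$ such points.

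The key step is a Gauss--Bonnet count for the induced metric with conical singularities. Near a branch point of order $k$, $ds^2=e^{2\varphi}|z|^{2k}|dz|^2$, so the cone angle is $2\pi(k+1)$. Gauss--Bonnet for conical metrics reads
\[
\int_\Sigma K\,dA = 2\pi(2-2\gamma)+2\pi\sum_{q} k_q ,
\]
where the sum runs over the branch points. For a minimal surface in $S^n(1)$ the Gauss equation gives $K=1-\tfrac12|A|^2\leq 1$, so $\int_\Sigma K\,dA\leq \mathrm{Area}(f)$. Combining these with $\sum_q k_q\geq \mathfrak{o}(G)$ yields
\[
\mathrm{Area}(f)\geq 2\pi\Big(\sum_q k_q-2(\gamma-1)\Big)\geq 2\pi\big(\mathfrak{o}(G)-2(\gamma-1)\big),
\]
which contradicts (\ref{branching}). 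Therefore $f$ is unbranched.

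The main point to check carefully is the justification of Gauss--Bonnet with cone points for the induced metric. Equivalently, one can use the degree of the canonical divisor: in a local holomorphic coordinate, $f_z$ has zeros exactly at the branch points, with orders $k_q$. Then
\[
\int_\Sigma K\,dA-2\pi\chi(\Sigma)=2\pi\sum_q k_q ,
\]
which one obtains by applying Gauss--Bonnet to the smooth conformal metric $ds^2=|z|^{-2k}\cdot(\text{induced})$, i.e. by computing $\Delta\log|f_z|$ near each branch point. The pointwise bound $K\leq 1$ is immediate away from the finitely many branch points, and $K\,dA$ is integrable there, so the inequality $\int_\Sigma K\,dA\leq \mathrm{Area}(f)$ holds. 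I would also note that orbits with nontrivial stabilizers have size at least $\mathfrak{o}(G)$, while free orbits have size $|G|\geq\mathfrak{o}(G)$, so the orbit bound used above holds whatever the stabilizer of $p$ is.
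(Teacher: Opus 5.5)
Your proof is correct and follows the paper's argument. The paper also combines three facts: the Gauss equation gives $K\le 1$ (it writes $|\sigma|^2=2-2K$), Gauss--Bonnet for the branched metric gives $\int K\,dA=2\pi(2-2\gamma+b)$ so that $b\le \frac{1}{2\pi}\mathrm{Area}(f)+2(\gamma-1)$, and equivariance forces $b\ge\mathfrak{o}(G)$ whenever $b>0$. Your remarks on the $G$-invariance of branching orders and on free orbits spell out what the paper leaves implicit.
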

\begin{proof}
 Let $K$ and $\sigma$ be the Gauss curvature and the second fundamental form  of the immersion. The Gauss equation gives
\begin{equation}
|\sigma|^2 = 2-2K.
\label{gauss1}
\end{equation}
The Gauss-Bonnet theorem holds not only for regular metrics but also for branched ones, see \cite{tribuzy, troyanov}, 
the first reference consider branched minimal surfaces and the other general conformal metrics with conical singularities.
The total branching number $b$ of the minimal immersion $f$ is a nonnegative integer and the Gauss-Bonnet theorem gives
\[
\int K dA = 2\pi (2-2\gamma+b).
\]
From the above two equalities, as $|\sigma|\geq 0$ we have  
\begin{equation}
\label{b>0}
b \leq \frac{1}{2\pi}Area(f) +2(\gamma-1).
\end{equation}
If $b>0$,  then $b\geq \mathfrak{o}(G)$ and the result follows.
\end{proof}

\vspace{.2cm}

\section{Minimal inmersions of surfaces by first eigenfunctions}

Let $\Sigma$ be a compact Riemann surface of genus $\gamma$. For any conformal Riemannian metric $ds^2$ on $\Sigma$, we consider the {\it normalized first eigenvalue} of the Laplacian,
\[
\bar{\lambda}_1(ds^2) = \lambda_1(ds^2)\operatorname{area}(ds^2).
\]
This functional is bounded above in terms of the genus by the Yang--Yau inequality \cite{yy},
\[
\bar{\lambda}_1(ds^2) \leq 8\pi \Big[\frac{\gamma+3}{2}\Big],
\]
where $[x]$ denotes the integer part of $x$. For sharper upper bounds, see \cite{kv,ros3}.

For a fixed Riemann surface $\Sigma$, we define
\[
\Lambda_1(\Sigma) =
\sup\big\{\bar{\lambda}_1(dt^2) \mid
dt^2 \text{ is a conformal metric on } \Sigma\big\}.
\]
Petrides \cite{petrides} proved that this supremum is attained by a conformal metric $ds^2$ with finitely many conical singularities. 
Moreover, the maximizing metric gives rise to a harmonic map
\[f:\Sigma \longrightarrow S^{m}(1)\subset \mathbb{R}^{m+1},\]
such that \(ds^2 = ds_f^2\), and whose coordinate functions are first eigenfunctions of \(ds^{2}\).

\vspace{.1cm}

Every branched conformal minimal immersion $f:(\Sigma,ds^2)\longrightarrow S^m(1)$ provides an upper bound for the normalized first eigenvalue over the conformal class of $\Sigma$. Indeed, by the results of Li and Yau on conformal area \cite{liyau},
\begin{equation}
\label{liyau1}
\Lambda_1(\Sigma)\leq 2\operatorname{Area}(f),
\end{equation}
with equality if and only if $f$ is a minimal immersion by first eigenfunctions, that is,
\[
\Delta f+2f=0
\hspace{.4cm}\text{and}\hspace{.4cm}
\lambda_1(ds^2)=2.
\]

For minimal immersions by first eigenfunctions, the following theorem provides a strong uniqueness result for conformal metrics. 
It was established by Montiel and Ros \cite[Theorem 1]{montiel-ros1} in the unbranched case and 
extended to the general setting by Cianci, Karpukhin and Medvedev \cite[Theorem 1.4]{karpu1}.

\begin{theorem}[\cite{montiel-ros1,karpu1}]
\label{mrkarpu}
Let $\Sigma$ be a compact Riemann surface of genus $g\geq 1$. Then there exists at most one conformal metric $ds^2$ on $\Sigma$, possibly with singularities at branch points, that admits a full branched minimal immersion by first eigenfunctions
\[
f:(\Sigma,ds^2)\longrightarrow S^n(1)
\]
for some $n\geq 3$. Moreover, if such an immersion exists, then
\(
\bar{\lambda}_1(ds^2)=\Lambda_1(\Sigma)
\)
 and $ds^2$ is the unique maximizer of the normalized first eigenvalue within its conformal class.
\end{theorem}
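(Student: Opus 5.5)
This statement is quoted from \cite{montiel-ros1,karpu1}, so the plan is to reassemble those proofs from the tools already in the paper. The engine is the Li--Yau conformal area inequality (\ref{liyau1}), sharpened by a Hersch-type balancing argument. Suppose $f:(\Sigma,ds^2)\to S^n(1)$ is a full branched minimal immersion by first eigenfunctions. Then $\Delta f+2f=0$ and $\lambda_1(ds^2)=2$, so $\bar\lambda_1(ds^2)=2\operatorname{Area}(f)$. Combined with (\ref{liyau1}), this gives $\Lambda_1(\Sigma)\le 2\operatorname{Area}(f)=\bar\lambda_1(ds^2)\le\Lambda_1(\Sigma)$. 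Hence $ds^2$ is a maximizer and $\Lambda_1(\Sigma)=2\operatorname{Area}(f)$.

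For uniqueness of the maximizer, let $dt^2$ be any conformal metric on $\Sigma$. I would use the conformal transformations $g$ of $S^n$, which form a noncompact family parametrized by the open ball $B^{n+1}$. A degree/fixed-point argument (Hersch's trick) yields some $g$ for which $\int_\Sigma g\circ f\,dA_{dt^2}=0$. The coordinates of $g\circ f$ are then admissible test functions for $\lambda_1(dt^2)$. Summing the Rayleigh quotients and using conformal invariance of the Dirichlet energy gives
\[
\lambda_1(dt^2)\operatorname{Area}(dt^2)\le \int|\nabla(g\circ f)|^2 = 2\operatorname{Area}(g\circ f)\le 2\operatorname{Area}(f).
\]
The last inequality holds because a minimal surface in $S^n$ maximizes area in its conformal orbit. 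That maximization is the Li--Yau input, and it is strict unless $g$ is an isometry, since $f$ is full and not contained in a hypersphere.

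Now suppose $dt^2$ is also maximizing. Then every inequality above is an equality, and in particular $g$ must be orthogonal. Equality in the Rayleigh quotient step forces each coordinate of $f$ (after a rotation) to be a first eigenfunction of $dt^2$, so $\Delta_{dt^2}f=-\lambda_1(dt^2)f$. Writing $dt^2=\mu\,ds^2$, we have $\Delta_{dt^2}=\mu^{-1}\Delta_{ds^2}$, and therefore $2f/\mu=\lambda_1(dt^2)f$. Since $|f|=1$, $\mu$ is constant, and $dt^2$ is a constant multiple of $ds^2$. If a second full branched minimal immersion $\tilde f$ exists for a metric $d\tilde s^2$, both metrics are maximizers, so they agree up to scaling. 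The normalization $\lambda_1=2$ then makes them equal.

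The main obstacle is the singular setting of \cite{karpu1}. Both the metrics with conical points and the branch points of $f$ require justification that the variational characterization of $\lambda_1$ and the energy identities still hold. I would handle this by working with $H^1$ test functions and noting that conical singularities have zero capacity, so removing them changes nothing. The strictness of the Li--Yau area comparison for non-isometric $g$ also needs care, for example via the second variation of $\operatorname{Area}(g_a\circ f)$ along the family $g_a$. Fullness together with $n\ge3$ ensures the degenerate cases do not occur.
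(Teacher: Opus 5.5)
The paper does not prove this statement itself. It quotes it from \cite{montiel-ros1,karpu1}, and the only argument it supplies is the Li--Yau chain $\bar{\lambda}_1(ds^2)\le\Lambda_1(\Sigma)\le 2\,\mathrm{Area}(f)=\bar{\lambda}_1(ds^2)$ from \eqref{liyau1}, reused in the proof of Theorem \ref{teor11}; that chain is exactly your first step. Your uniqueness argument is the Montiel--Ros proof that those references rely on: Hersch balancing of $g\circ f$ against $dA_{dt^2}$, the comparison $\mathrm{Area}(g\circ f)\le\mathrm{Area}(f)$, and the equality analysis forcing $g$ to be orthogonal and $dt^2$ to be a constant multiple of $ds^2$. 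So your proposal is correct and follows the same route.

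The one step to tighten is the strictness of the area comparison. A second-variation computation at $a=0$ only shows that $a=0$ is a strict local maximum. The global statement comes from integrating $\Delta\log(1+\langle a,f\rangle)$ over $\Sigma$, which gives the exact identity $\mathrm{Area}(g_a\circ f)=\mathrm{Area}(f)-\int_\Sigma |a^N|^2(1+\langle a,f\rangle)^{-2}\,dA$. Here $g_a$ has conformal factor $(1-|a|^2)(1+\langle a,x\rangle)^{-2}$, and $a^N$ is the component of $a$ normal to $f(\Sigma)$ inside $S^n$. Since $a^N\equiv 0$ with $a\neq 0$ forces $f$ to be totally geodesic, this is impossible for a full $f$ with $n\ge 3$.
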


Note that the theorem establishes the uniqueness of the maximizing metric $ds^2$, but not necessarily that of the associated minimal immersion $f$. The situation is different when $n=2$, corresponding to holomorphic or antiholomorphic maps from $\Sigma$ to $S^2(1)$, for which uniqueness of the metric need not hold; see \cite{karpu1}.

\vspace{.2cm}

Let \(\mathcal{M}\text{etrics}(\gamma)\) denote the space of all metrics with conical singularities on a compact, 
orientable smooth surface of genus \(\gamma\).
Although the functional \(\bar{\lambda}_1:\mathcal{M}\text{etrics}(\gamma)\longrightarrow \mathbb{R}\) is not differentiable, 
it is only Lipschitz continuous, minimal immersions by first eigenfunctions play the role of critical points for this functional, as established by Berger \cite{berger}, Nadirashvili \cite{nadir}, and El Soufi and Ilias \cite{elsoufi}.

\vspace{.2cm}

A central problem in geometric spectral theory is to maximize the first eigenvalue of the Laplacian over surfaces of fixed genus. We denote by 
$\Lambda_1(\gamma)$ the supremum of the normalized first eigenvalue over all Riemannian metrics on compact orientable surfaces of genus $\gamma$:
\[
\Lambda_1(\gamma)=
\sup\big\{\bar{\lambda}_1(ds^2)\mid
ds^2 \in \mathcal{M}\text{etrics}(\gamma)
\big\}.
\]

The following theorem was previously known in some special cases, but its general form has only recently been established.

\begin{theorem}[Karpukhin, Petrides, Stern \cite{petrides2,kps}]
For every genus $\gamma$, there exists a compact Riemannian surface $(\Sigma,ds^2)$, possibly with finitely many branch points, such that
\[
\bar{\lambda}_1(ds^2)=\Lambda_1(\gamma).
\]
Moreover, the maximizing metric admits a branched minimal immersion
\[
f:(\Sigma,ds^2)\longrightarrow S^m(1)
\]
by first eigenfunctions.
\end{theorem}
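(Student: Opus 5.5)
The statement is the Karpukhin--Petrides--Stern existence theorem. Since it is quoted from \cite{petrides2,kps}, I would not reprove it here; I would only outline the variational argument behind it, in two stages.

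\textbf{Stage 1: fixed conformal class.} For each Riemann surface $\Sigma\in\mathcal{M}_\gamma$, Petrides \cite{petrides} (see also \cite{karpukhin-stern}) gives a conformal metric with conical singularities attaining $\Lambda_1(\Sigma)$. It comes with a harmonic map $f_\Sigma:\Sigma\to S^m(1)$ by first eigenfunctions with $ds^2=ds^2_{f_\Sigma}$. The proof goes through a maximizing sequence of conformal metrics, regularized so that the measures $\lambda_1 dA$ stay controlled. One passes to weak limits of the measures and shows that no mass concentrates at points. The key input is the strict inequality $\Lambda_1(\Sigma)>8\pi$ for $\gamma\ge1$, and more generally gap conditions comparing $\Lambda_1(\Sigma)$ with bubbling values. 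The first eigenfunctions then converge to a harmonic map, and the limiting measure is $\frac12|\nabla f|^2dA$.

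\textbf{Stage 2: maximize over moduli.} Take Riemann surfaces $\Sigma_k\in\mathcal{M}_\gamma$ with $\Lambda_1(\Sigma_k)\to\Lambda_1(\gamma)$. The main obstacle is to prevent $\Sigma_k$ from degenerating to the boundary of $\mathcal{M}_\gamma$. For this I would use the strict monotonicity
\[
\Lambda_1(\gamma)>\max\{\Lambda_1(\gamma-1)+8\pi,\ \max_{\gamma_1+\gamma_2=\gamma}(\Lambda_1(\gamma_1)+\Lambda_1(\gamma_2))\},
\]
proved in \cite{petrides2,kps} by gluing a small handle or sphere onto a maximizer and computing the eigenvalue gain. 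Degeneration along pinched collars would force $\limsup\Lambda_1(\Sigma_k)$ below these values. Hence the $\Sigma_k$ lie in a compact subset of moduli space, and a subsequence converges to some $\Sigma$. Continuity of $\Lambda_1$ on $\mathcal{M}_\gamma$ then shows that $\Sigma$ attains $\Lambda_1(\gamma)$.

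\textbf{Conformality.} The harmonic map $f$ attached to $\Sigma$ must be conformal. Since $ds^2$ is critical for $\bar\lambda_1$ also under variations of the conformal structure (El Soufi--Ilias, Nadirashvili \cite{nadir,elsoufi}), the Hopf differential $\Omega_1(f)$ must vanish. Thus $f$ is a branched minimal immersion by first eigenfunctions with $ds^2=\langle df,df\rangle$.
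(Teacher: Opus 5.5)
The paper only quotes this theorem, and your two-stage architecture is the standard one. However, the inequality you make load-bearing in Stage 2 is false. With the values quoted in the introduction, $\Lambda_1(1)=8\pi^2/\sqrt3\approx 14.51\pi<16\pi=\Lambda_1(0)+8\pi$. Also $\Lambda_1(2)=16\pi$ lies below both $\Lambda_1(1)+8\pi\approx 22.5\pi$ and $2\Lambda_1(1)\approx 29\pi$. More generally, iterating $\Lambda_1(\gamma)>\Lambda_1(\gamma-1)+8\pi$ would give $\Lambda_1(\gamma)>8\pi(\gamma+1)$, which contradicts the Yang--Yau bound $8\pi[(\gamma+3)/2]$. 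Sums of the form $\Lambda_{k-1}+8\pi$ and $\Lambda_{k_1}(\gamma_1)+\Lambda_{k_2}(\gamma_2)$ are the gap conditions for \emph{higher} eigenvalues $\Lambda_k$. There the limiting mass can split among several components and the eigenvalue indices add up. For $k=1$ the extra terms carry $\Lambda_0=0$ and disappear. Note also that gluing a sphere does not change the genus, so it cannot yield any monotonicity in $\gamma$.

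What is true for $\lambda_1$ is that the limiting measure cannot charge two pieces joined by a collapsing collar or a bubble neck. A mean-zero test function equal to suitable constants on the two pieces, interpolated logarithmically along the neck, has Dirichlet energy tending to $0$, so $\bar\lambda_1\to 0$. Hence all the mass concentrates on one limiting component. A degenerating sequence then satisfies $\limsup_k\Lambda_1(\Sigma_k)\le\max\{\Lambda_1(\gamma-1),8\pi\}$ for a non-separating pinch, or $\le\max\{\Lambda_1(\gamma_1),\Lambda_1(\gamma_2)\}$ for a separating one. Both bounds are at most $\Lambda_1(\gamma-1)$ by the non-strict monotonicity of $\Lambda_1$ in the genus. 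So the gap you actually need is only $\Lambda_1(\gamma)>\Lambda_1(\gamma-1)$. This is Petrides' existence criterion, and its validity in every genus was later obtained by a handle-attachment argument (Matthiesen--Siffert). With this replacement, Stage 2 goes through.

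A smaller point concerns the conformality step: the harmonic map from Stage 1 need not itself be conformal, because the metric does not determine the map. For example, take the equilateral flat torus $\mathbb{R}^2/L$ normalized so that $\lambda_1=2$, and let $k\in 2\pi L^*$ have minimal length, so $|k|^2=2$. Then $x\mapsto(\cos\langle k,x\rangle,\sin\langle k,x\rangle)$ is a non-conformal harmonic map into a great circle with $ds_f^2=ds^2$. What extremality under all variations gives (El Soufi--Ilias, Nadirashvili) is \emph{some} family of first eigenfunctions $u_1,\dots,u_{m+1}$ with $\sum_i du_i\otimes du_i=ds^2$. By Takahashi's argument $\sum_i u_i^2=1$, and this family is the branched minimal immersion.
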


The characterization of maximizing metrics in terms of minimal immersions by first eigenfunctions goes back to Nadirashvili \cite{nadir}.

\vspace{.2cm}

Equivariant versions of Petrides' existence theorem \cite{petrides} were first established by Karpukhin, Kusner, McGrath, and Stern \cite[Theorem 3.12]{kusner}, 
where the authors construct a family of news embedded compact symmetric minimal surfaces in the unit 3-sphere \(S^3(1)\).
The precise result needed in this paper is given by Vinokurov \cite[Corollary 1.4]{vino}. It applies to finite groups of holomorphic automorphisms $G\leq\operatorname{Aut}(\Sigma)$ whose orbits contain at least two points, $\mathfrak{o}(G)\geq 2$.

\begin{theorem}[\cite{kusner}, \cite{vino}]
\label{3.12}
Let $\Sigma$ be a compact Riemann surface and let $G\leq\operatorname{Aut}(\Sigma)$ be a finite group with $\mathfrak{o}(G)\geq 2$. Then there exists a $G$-equivariant harmonic map
\[
f:\Sigma\longrightarrow S^n(1)\subset\mathbb{R}^{n+1}
\]
for some integer $n\geq 1$, such that the associated conformal metric $ds_f^2$ maximizes the normalized first eigenvalue among all $G$-invariant conformal metrics on $\Sigma$:
\[
\bar{\lambda}_1(ds_f^2)
=
\sup\big\{
\bar{\lambda}_1(dt^2)\mid
dt^2 \text{ is a }G\text{-invariant conformal metric on }\Sigma
\big\}.
\]
Moreover, the coordinate functions of $f$ are first eigenfunctions of the Laplacian for $ds_f^2$.
\end{theorem}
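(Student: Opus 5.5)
The plan is to follow Petrides' regularization scheme for maximizing $\bar{\lambda}_1$ in a conformal class, or equivalently the Karpukhin--Stern harmonic-map relaxation, while keeping $G$-invariance at every stage. The hypothesis $\mathfrak{o}(G)\geq 2$ is used only to exclude concentration of the maximizing sequence.

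\textbf{Step 1: relaxed problem.} Fix a smooth $G$-invariant background conformal metric $g_0$, obtained by averaging any metric over $G$. I would replace $G$-invariant conformal metrics by $G$-invariant Radon probability measures $\mu$ on $\Sigma$, with
\[
\lambda_1(\mu)=\inf\Big\{\int|\nabla u|^2dA_{g_0}\,\Big/\!\int u^2d\mu \;:\; \int u\,d\mu=0\Big\}.
\]
Here the Dirichlet energy is conformally invariant, so only $\mu$ matters. Let $\Lambda_G$ denote the supremum over smooth $G$-invariant densities, and take a maximizing sequence $\mu_k=e^{2\varphi_k}dA_{g_0}$ of $G$-invariant smooth densities. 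Following Petrides, I would replace $\mu_k$ by regularized or mollified measures, where the mollification commutes with the isometric $G$-action. For these I would solve the regularized Euler--Lagrange problem, which yields $G$-invariant approximate maximizers together with finitely many first eigenfunctions $u^k_1,\dots,u^k_{N}$ satisfying $\sum_i (u^k_i)^2\approx 1$ on the support.

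\textbf{Step 2: no concentration.} Up to a subsequence, $\mu_k\rightharpoonup\mu$ weakly, and $\mu$ is $G$-invariant. Suppose $\mu$ has an atom at $p$, or more generally concentrates at $p$ in the sense that the energy bubbles there. By invariance the same happens at every point of the orbit $G\cdot p$, and this orbit has at least two points. Choose disjoint small discs $B_1,B_2$ around two of these points. Logarithmic cutoff functions $\eta_j$ supported in $B_j$, equal to $1$ near the centers, have arbitrarily small Dirichlet energy and $\int\eta_j^2 d\mu_k\geq c>0$. A suitable combination $a\eta_1+b\eta_2$ is $\mu_k$-orthogonal to constants, and this forces $\lambda_1(\mu_k)\to0$. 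That contradicts $\bar\lambda_1(\mu_k)\to\Lambda_G>0$. Hence $\mu$ has no atoms and no bubbling occurs, so the approximate eigenfunctions converge in $W^{1,2}$.

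\textbf{Step 3: the limit is a harmonic map.} The limit eigenfunctions give a map $\Phi=(u_1,\dots,u_N)$ with $|\Phi|=1$ $\mu$-a.e., and $\Phi$ is weakly harmonic into $S^{N-1}$ with $\mu=\frac{1}{\Lambda_G}|\nabla \Phi|^2dA$. By H\'elein's theorem $\Phi$ is smooth away from the finitely many points with nontrivial stabilizer. Sacks--Uhlenbeck removability, as recalled in Section~\ref{harmonicmaps}, then makes $\Phi$ smooth everywhere, and the metric $ds^2_\Phi$ attains $\Lambda_G$.

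The first-order condition only sees $G$-invariant variations of the metric. It therefore gives a relation $\sum_i u_i^2=1$ only after averaging, so the first eigenspace $E$ must be treated as a $G$-module. Concretely, the extremality condition says that $1$ lies in the closed convex hull of the $G$-invariant projections of $\{u^2: u\in E\}$. Averaging over $G$ gives a positive semidefinite $G$-invariant quadratic form $Q$ on $E$ with $\sum Q(e_i)e_i^2\equiv1$. Diagonalizing $Q$ inside each isotypic component, one can choose an orthonormal-type family $\sqrt{Q}e_i$ spanning a $G$-invariant subspace. The resulting map $f$ is then $\rho$-equivariant for the orthogonal representation induced on that subspace, its coordinates are first eigenfunctions of $ds_f^2$, and $ds^2_f=ds^2_\Phi$.

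\textbf{Main obstacle.} The main obstacle is Step 2 in full rigor. One must show that bubbling cannot occur on an orbit: the standard argument only rules out concentration when $\Lambda_G>8\pi$, and here it is the two-point orbit that replaces the strict inequality. One must also show that the regularization passes to the limit in $W^{1,2}$. I would first try to deduce this from the Karpukhin--Stern characterization of $\Lambda_G$ as a min-max energy of $G$-equivariant sphere-valued maps, restricted to $G$-equivariant maps.
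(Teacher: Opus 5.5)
The paper gives no proof of this statement. It imports it from \cite{kusner} (Theorem 3.12) and \cite{vino} (Corollary 1.4), so your proposal has to be read as a reconstruction of those results. The equivariant ingredients you supply are correct.

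\emph{Atoms.} Your two-cutoff argument does exclude atoms in a $G$-invariant weak limit $\mu$ of a maximizing sequence. In fact a single cutoff $\eta-\int\eta\,d\mu_k$ already excludes any atom of mass $<1$. So for maximizing sequences the only possible mass concentration is $\mu=\delta_p$, and $G$-invariance together with $\mathfrak{o}(G)\ge 2$ rules that out.

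\emph{Equivariance.} Your Step 3 algebra is right. Averaging the Euler--Lagrange condition gives a positive semidefinite $Q$ on $E$ commuting with $G$. Composing $\sqrt{Q}$ with the evaluation map then gives a $\rho$-equivariant map into the sphere whose coordinates are first eigenfunctions.

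\emph{Two minor corrections.} In Step 1, the relation $\sum_i (u_i^k)^2\approx 1$ holds only for the $G$-translated family $\{c_i\,u_i^k\circ\phi\}$, and you should use that family from the start. The restriction to points with trivial stabilizer, followed by Sacks--Uhlenbeck, is unnecessary: H\'elein's theorem gives smoothness on all of $\Sigma$ once the weak harmonic map equation holds there.

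The genuine gap is the sentence ``hence $\mu$ has no atoms and no bubbling occurs, so the approximate eigenfunctions converge in $W^{1,2}$.'' It extends to the Step 3 claims that $|\Phi|\equiv 1$ and $\mu=\Lambda_G^{-1}|\nabla\Phi|^2dA$. Your cutoff argument controls only the $\mu_k$-mass of small balls, so it leaves three things open:
\begin{itemize}
\item it does not prevent concentration of the Dirichlet energy of the eigenfunction maps;
\item it does not exclude a non-atomic singular part of $\mu$;
\item ``$|\Phi|=1$ $\mu$-a.e.'' does not give a map into the sphere on all of $\Sigma$, and without $|\Phi|\equiv1$ the identity $\Lambda_G\,\mu=|\nabla\Phi|^2dA$ does not follow from the eigenvalue equation.
\end{itemize}
What is missing is the analytic core of the non-equivariant theory, rerun inside $G$-invariant or $G$-equivariant classes. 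One option is the compactness and regularity part of Petrides' existence proof: regularized maximizers either concentrate to a single Dirac mass, or their eigenfunction maps converge strongly to a harmonic map with energy density $\Lambda\mu$. The other is the Karpukhin--Stern min--max characterization through relaxed energies, together with its bubbling analysis. The heat-kernel regularization and the relaxed energies do commute with the isometric $G$-action, so this is plausible. Your orbit observation becomes decisive only once such a dichotomy is in hand. Establishing it equivariantly is exactly the content of the cited works, which is why the paper treats the theorem as a black box.
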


\vspace{.2cm}

We are interested in the case where the group $G$ has large orbits. Under this hypothesis, a stronger version 
of Theorem \ref{3.12} follows by a rather direct proof linked to minimal surfaces theory.

\vspace{.2cm}

\begin{theorem} 
\label{teor11}
Let $\Sigma$ a compact Riemann surface of genus $g\geq 3$ 
and $G\leq Aut(\Sigma)$  be an holomorphic  automorphisms group whose orbits satisfy $\mathfrak{o}(G)>4(g-1)$. 
  Then,  there exists  a full, $G$-equivariant, unbranched 
  conformal minimal immersion 
  by first eigenfunctions, 
  \[
   f \hspace{-.1cm}:(\Sigma,ds^2)\longrightarrow S^n(1) \subset \R^{n+1}, n\geq 3.
  \]
Furthermore, the induced metric $ds^{2}$ is the unique one that maximizes the functional $\bar{\lambda }_{1}$ within the conformal class,
    \[
    \bar{\lambda}_1(ds^2) = 2Area(ds^2)= \Lambda_1(\Sigma).
    \]
\end{theorem}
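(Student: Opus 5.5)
We start from Theorem \ref{3.12}. Since $\mathfrak{o}(G)>4(g-1)\geq 8\geq 2$, it provides a $G$-equivariant harmonic map $f:\Sigma\to S^n(1)$ whose coordinates are first eigenfunctions of $ds^2_f$. The metric $ds^2_f$ maximizes $\bar\lambda_1$ among $G$-invariant conformal metrics. Replacing $\mathbb{R}^{n+1}$ by the linear span of $f(\Sigma)$ makes $f$ full, and the span is $\rho$-invariant, so equivariance is kept. By Lemma \ref{hopf}(i), the hypothesis $\mathfrak{o}(G)>4(g-1)$ forces $\Omega_1(f)=0$. Hence $f$ is a branched conformal minimal immersion by first eigenfunctions, $ds^2_f$ is its induced metric, and $\lambda_1(ds^2_f)=2$.

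Next we rule out small $n$. If $n=1$, $f$ maps into a circle and cannot be conformal and nonconstant. If $n=2$, $f$ is a $G$-equivariant holomorphic or antiholomorphic map to $S^2$. Its coordinates are first eigenfunctions of the pulled-back metric, so $\bar\lambda_1=2\cdot 4\pi\deg f=8\pi \deg f$. We exclude this case by comparing with Li--Yau \eqref{liyau1} and the lower bound for $G$-invariant maximizers. Concretely, we expect $\deg f$ to be forced large by equivariance, since branch points of $f$ come in $G$-orbits of size $\geq\mathfrak{o}(G)$. Riemann--Hurwitz then gives $2g-2+2\deg f=b\geq \mathfrak{o}(G)$, so $\deg f>g-1$. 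One must then check that $\lambda_1$ of the pulled-back round metric is indeed $2$, or otherwise reach a contradiction. If this step resists, we instead note that the equivariance of $f$ yields an orthogonal $3$-dimensional representation of $G$ acting on $S^2$, hence a finite subgroup of $O(3)$. Those are the cyclic, dihedral, $A_4$, $S_4$, $A_5$ groups and their products with $\pm1$. We can then exclude the ones compatible with our signatures, using that the kernel acts freely on the fibers. Once $n\geq 3$ is known, Theorem \ref{mrkarpu} gives that $ds^2=ds^2_f$ is the unique maximizer in the whole conformal class, and $\bar\lambda_1(ds^2)=2\operatorname{Area}(f)=\Lambda_1(\Sigma)$.

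The remaining point is unbranchedness, via Theorem \ref{unbranched}. We need $\operatorname{Area}(f)<2\pi(\mathfrak{o}(G)-2(g-1))$. The Yang--Yau bound gives $2\operatorname{Area}(f)=\Lambda_1(\Sigma)\leq 8\pi[(g+3)/2]\leq 4\pi(g+3)$, so $\operatorname{Area}(f)\leq 2\pi(g+3)$. It therefore suffices that $\mathfrak{o}(G)>3g+1$. By Theorem \ref{o(G)}, $\mathfrak{o}(G)\geq 6(g-1)$, and $6g-6>3g+1$ holds exactly when $g\geq 3$; this is where the genus hypothesis enters. Thus $f$ is unbranched.

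I expect the main obstacle to be the exclusion of $n=2$. The unbranchedness estimate is a direct computation, but the $S^2$ case needs a genuine argument: either through the finite subgroups of $O(3)$, or by showing that a branched cover of $S^2$ with large branching orbits cannot realize first eigenfunctions.
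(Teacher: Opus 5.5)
There is a genuine gap: you never actually exclude $n=2$, and your unbranchedness step depends on that exclusion. You obtain $2\operatorname{Area}(f)=\Lambda_1(\Sigma)$ only through Theorem~\ref{mrkarpu}, which requires $n\geq 3$. Neither of your two routes for $n=2$ closes.

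The route through finite subgroups of $O(3)$ cannot work by group theory alone. Take the Fermat quartic with its $(0;2,3,8)$ group $G$ of order $96$. The map $[x:y:z]\mapsto[x^2:y^2:z^2]$ onto the conic $u^2+v^2+w^2=0\cong S^2$ is a $G$-equivariant holomorphic map of degree $4$, with $\rho(G)\cong S_4\subset SO(3)$. So equivariant conformal harmonic maps into $S^2$ do exist, and they can only be excluded using the first-eigenfunction property.

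The Riemann--Hurwitz route is the right idea but stops one line short. (There is nothing to check about $\lambda_1=2$; that is part of Theorem~\ref{3.12}.) What is missing is the Yang--Yau upper bound $8\pi\deg f=\bar\lambda_1(ds_f^2)\leq 8\pi[(g+3)/2]$. Combined with $2g-2+2\deg f=b\geq\mathfrak{o}(G)\geq 6(g-1)$, it gives $2(g-1)\leq\deg f\leq (g+3)/2$, which is impossible for $g\geq 3$. The weaker bound $\deg f>g-1$, which is all you extract from $\mathfrak{o}(G)>4(g-1)$, does not suffice at $g=3$, where Yang--Yau still allows $\deg f=3$.

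The paper avoids the issue by reversing the order of your last two steps. The area bound needed in Theorem~\ref{unbranched} does not require $n\geq 3$. The chain $\bar\lambda_1(ds^2_f)\leq\Lambda_1(\Sigma)\leq 2\operatorname{Area}(f)=\bar\lambda_1(ds^2_f)$ coming from \eqref{liyau1} holds in any $S^n$, and in any case $2\operatorname{Area}(f)=\bar\lambda_1(ds^2_f)\leq 8\pi[(g+3)/2]$ holds directly. Theorem~\ref{unbranched}, which uses only the Gauss equation and Gauss--Bonnet, is also valid for $n=2$.

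So your (correct) computation $\mathfrak{o}(G)\geq 6(g-1)>3g+1$ shows that $f$ is unbranched before anything is known about $n$. Then $n=2$ is immediate: an unbranched conformal map from a closed surface to $S^2$ is a covering of the simply connected sphere, which is impossible for $g\geq 2$. Only after that is Theorem~\ref{mrkarpu} invoked, for the uniqueness of the maximizing metric. For $n=2$ the curvature inequality in the proof of Theorem~\ref{unbranched} is an equality ($|\sigma|^2=0$) and reduces to your Riemann--Hurwitz count. So your Riemann--Hurwitz attempt and the paper's argument are really the same estimate, applied in the right order.
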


\begin{proof}
From Theorem \ref{3.12}, there is a full $G$-equivariant harmonic map by first eigenfunctions,
\[
f:\Sigma \longrightarrow S^n(1).
\] 
Let's denote by $ds^2$ the conformal metric associated with $f$, i.e. $ds^2= ds^2_f$.  
Since the orbits of $G$ have more than $4(\gamma-1)$ points, it follows from  Lemma \ref{hopf}  that $f$ is a conformal harmonic map, i.e. $f$ is a branched isometric minimal surface by first eigenfunctions of $(\Sigma, ds^2)$ in the unit sphere $S^n(1)$, $n\geq 2$. 
 
From (\ref{liyau1}), the area of any conformal minimal surface bounds the normalized first eigenvalue within its conformal class, whether the metrics are $G$-invariant or not. Consequently we obtain the following chain of inequalities:
\[
\bar{\lambda}_1(ds^2) \leq \Lambda_1(\Sigma) \leq 2 Area(f) = \bar{\lambda}_1(ds^2),
\] 
and we get that $f$ is a branched conformal minimal immersion by first eigenfunctions.

To prove that the immersion $f$ is unbranched, using Theorem \ref{unbranched}, we bound the area of the immersion  and the length of the $G$-orbits separately. 

By Yang and Yau \cite{yy}
\[
2 Area(f) =\Lambda_1(\Sigma) \leq 8\pi \big{[} \frac{\gamma+3}{2} \big{]} \leq 4\pi(\gamma+3).
\]
On the other hand, Table (\ref{tabla1}) yields $\mathfrak{o}(G) \geq 6(\gamma-1)$, which implies
 \[
4\pi \big(  \mathfrak{o}(f)- 2(\gamma-1) \big) \geq  16\pi(\gamma -1).
 \]
 Since  $16\pi (\gamma-1 ) > 4\pi (\gamma +3)$ holds for all $\gamma \geq 3$, we conclude by Theorem   \ref{unbranched} that $f$ is unbranched.
 
 \vspace{.1cm}
 
If $n=2$, then $f: \Sigma \longrightarrow \mathbb{S}^2(1)$  is an unbranched covering map , which is a contradiction since the genus  $\gamma \geq 2$. Hence, $n\geq 3$ and the uniqueness of the metric $ds^2$ follows from Theorem \ref{mrkarpu}.
\end{proof}

The following two results reformulate Theorem \ref{teor11} in terms of the geometry of minimal immersions and their isometry groups.

\begin{theorem}
\label{family}
Let $\Sigma$ be a closed Riemann surface of genus $\gamma\geq 3$, and let $G$ be a group of holomorphic automorphisms of $\Sigma$ with signature $(2,3,7)$, $(2,3,8)$, $(2,4,5)$, or $(3,3,4)$. Then the following statements hold:

\medskip

{\rm i)} $\Sigma$ admits a $G$-equivariant, unbranched conformal minimal immersion 
\[
f:\Sigma\longrightarrow S^n(1),
\]
for some $n\geq 3$.

\medskip

{\rm ii)} The induced metric $ds^2$ is the unique metric in its conformal class that admits a minimal immersion into a unit sphere by first eigenfunctions. Moreover, it is the unique conformal metric attaining $\Lambda_1(\Sigma)$.

\medskip

{\rm iii)} For signatures $(2,3,7)$ and $(2,3,8)$, the quartic Hopf differential
\[
\Omega_2(f)=\langle f_{zz},f_{zz}\rangle\,dz^4
\]
vanishes identically.
\end{theorem}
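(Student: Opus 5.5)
The plan is to reduce Theorem \ref{family} to Theorem \ref{teor11}, Theorem \ref{mrkarpu} and Lemma \ref{hopf}, using the orbit table of Theorem \ref{o(G)}.

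\emph{Existence (part i).} By Theorem \ref{o(G)}, each of the four signatures $(2,3,7)$, $(2,3,8)$, $(2,4,5)$, $(3,3,4)$ satisfies $\mathfrak{o}(G)=|G|/m_3>4(\gamma-1)$. The smallest orbits are $12(\gamma-1)$, $6(\gamma-1)$, $8(\gamma-1)$ and $6(\gamma-1)$ respectively. Since $\gamma\geq 3$, the hypotheses of Theorem \ref{teor11} hold. That theorem gives a full, $G$-equivariant, unbranched conformal minimal immersion $f:\Sigma\to S^n(1)$ by first eigenfunctions, with $n\geq 3$. This proves (i).

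\emph{Uniqueness (part ii).} The induced metric $ds^2$ admits a full minimal immersion by first eigenfunctions into $S^n(1)$ with $n\geq 3$. Theorem \ref{mrkarpu} then says that $ds^2$ is the only conformal metric with this property, possibly allowing branch points. The same theorem gives $\bar\lambda_1(ds^2)=\Lambda_1(\Sigma)$ and that $ds^2$ is the unique maximizer within the conformal class. One point needs care: the uniqueness in Theorem \ref{mrkarpu} is stated for full immersions into spheres of dimension at least $3$. A competing metric admitting an immersion into $S^2$ would come from a holomorphic map to $S^2$. I would rule this out through the uniqueness of the maximizer. Indeed, any metric admitting a minimal immersion by first eigenfunctions attains $\Lambda_1(\Sigma)$, by the equality case of \eqref{liyau1}, so it must coincide with $ds^2$. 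A competitor that is not full can be handled by passing to its affine span.

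\emph{Vanishing of $\Omega_2$ (part iii).} Part (ii) of Lemma \ref{hopf} requires $\mathfrak{o}(G)>8(\gamma-1)$. For $(2,3,7)$ we have $\mathfrak{o}(G)=12(\gamma-1)$, so the lemma applies and $\Omega_2\equiv 0$. For $(2,3,8)$, however, $\mathfrak{o}(G)=6(\gamma-1)$, so the lemma does not apply directly. This is the main obstacle. I would instead count zeros of the $G$-invariant quartic differential by orbit type. A nonzero $\Omega_2$ has exactly $8(\gamma-1)$ zeros counted with multiplicity, and the zero set is $G$-invariant. The orbits have sizes $24(\gamma-1)$ (order-$2$ stabilizer), $16(\gamma-1)$ (order $3$), $6(\gamma-1)$ (order $8$), and $48(\gamma-1)$ (generic). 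Hence the zeros must be exactly the $6(\gamma-1)$ points with stabilizer of order $8$, with multiplicity such that the total is $8(\gamma-1)$. That forces multiplicity $4/3$, which is not an integer. This contradiction gives $\Omega_2\equiv 0$.

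A second possible obstacle is the well-definedness of the multiplicity. Multiplicities are constant along each orbit because $f$ is equivariant, and $\Omega_2$ is then invariant up to the isometries $\rho(\phi)$. The same orbit-counting argument should also cover the $(2,3,7)$ case, as a check.
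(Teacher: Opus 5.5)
Your proposal is correct and follows the paper's proof essentially step by step. Part (i) comes from Theorem \ref{teor11} via the orbit table, and part (ii) from Theorem \ref{mrkarpu}; your Li--Yau remark disposing of possible $S^2$ competitors is a harmless extra that the paper leaves implicit. Part (iii) uses Lemma \ref{hopf} for $(2,3,7)$ and, for $(2,3,8)$, an orbit count on the $G$-invariant zero divisor of $\Omega_2$. You observe that only the single orbit of size $6(\gamma-1)$ can carry zeros and that $8(\gamma-1)$ is not an integer multiple of it, which states the paper's brief remark that none of the orbit lengths divides $8(\gamma-1)$ more precisely.
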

\begin{proof}
$i)$ From Table~\ref{tabla1}, we have that the orbits for the four signatures satisfy $o(G) > 4(\gamma-1)$.
Therefore, from Theorem \ref{teor11}  we get that there is an $G$-equivariant full conformal unbranched minimal surface
$
f:\Sigma\longrightarrow S^n(1),  \hspace{.3cm}n\geq 3.
$
 Punto (ii) ya esta demostrado. Nos queda (iii)

\vspace{.1cm}

$iii)$ In all four cases, the Hopf differential $\Omega_2(f)$ is holomorphic, and it is either identically zero or has $8(\gamma -1)$ zeros (Lemma \ref{hopf}).

For Hurwitz surfaces, of signature $(2,3,7)$, we have $o(G) = 12(\gamma-1)$ and we conclude that the  Hopf differential vanishes, 
$\Omega_2(f)\equiv 0$. 

For surfaces of signature $(2,3,8)$ the lengths of the different types of orbits on the surface are
\[
\frac{48}{8}(\gamma-1) = 6(\gamma-1), \hspace{.4cm}
\frac{48}{3}(\gamma-1) = 16(\gamma-1), \hspace{.4cm}
\frac{48}{2}(\gamma-1) = 24(\gamma-1) \hspace{.2cm}{\rm and}\hspace{.2cm}
48(\gamma-1).
\] 
The second Hopf differential  has $8(\gamma-1)$ zeros;  since none of the lengths above is a divisor of $8(\gamma-1)$, it follows that $\Omega_2(f)\equiv 0$.
\end{proof}

{\bf Proof of Theorem \ref{teorema1}.}
When the group has order $84(\gamma-1)$, $48(\gamma-1)$ or $40(\gamma-1)$, the corresponding signatures are necessarily
 $(2,3,7)$, $(2,3,8)$, and $(2,4,5)$, respectively. For groups of order $24(\gamma-1)$, however, several signatures are possible; see Breuer's Lemma \cite{breuer}.
\hspace{10.5cm}
$\square$

\begin{remark}
The Bolza surface $\mathcal{B}$ is the only example of genus $\gamma=2$ in our family (see Table \ref{family3}). Its hyperelliptic map
$\Pi:\mathcal{B}\longrightarrow \mathbb{S}^2(1)\subset\mathbb{R}^3$
is a $G_{48}$-equivariant branched covering, with branch values at the eight vertices of a cube.

Nayatani and Shoda \cite{nayatani} proved that
$\Lambda_1(\mathcal{B})=\bar{\lambda}_1(ds^2_\Pi)=16\pi$
and that the coordinate functions of $\Pi$ are first eigenfunctions of the induced metric.

Moreover, this extremal metric is not unique: metrics induced by branched coverings obtained from small deformations of $\Pi$ have the same property.

While Theorem \ref{mrkarpu} ensures uniqueness for $\gamma\geq 3$, the Bolza surface provides an example of nonuniqueness in genus $2$.
\end{remark}

\vspace{.2cm}

\section{The Klein Quartic}

For any closed Riemann surface of genus $3$, the order of its holomorphic automorphism group is at most $168$, with equality attained only by the {\it Klein quartic}, the smooth plane quartic
\[
\mathcal{K}=\big\{[x,y,z]\in\mathbb{CP}^2\mid x^3y+y^3z+z^3x=0\big\}.
\]
Thus, $\mathcal{K}$ is the unique Hurwitz surface of genus $3$. In particular, any Riemannian metric $ds^2$ on a closed surface $\Sigma$ of genus $3$ whose group of orientation-preserving isometries has order $168$ belongs to the conformal class of $\mathcal{K}$.

\vspace{.1cm}

 The group   $G_{168}= Aut({\mathcal K}) \subset PGL(3,\C)$ is simple  and is generated by the following projectivities of order $7$, $3$ and $2$:

\[
\phi_7 = 
\left( 
\begin{array}{ccc}
\zeta^4 & 0&0 \\
0& \zeta^2 & 0 \\
0&0& \zeta
\end{array}
\right), \hspace{.5cm}
\phi_3 = 
\left(
\begin{array}{ccc}
0 & 0 &1 \\
1 & 0 & 0 \\
0 & 1 & 0
\end{array}
\right),
 \hspace{.5cm}
\phi_2 = \alpha
\left(
\begin{array}{ccc}
\zeta - \zeta^6 & \xi^2 - \zeta^5 & \zeta^4 - \zeta^3  \\
\zeta^2 - \zeta^5 & \zeta^4 - \zeta^3 & \zeta - \zeta^6  \\
\zeta^4 - \zeta^3 & \zeta - \zeta^6 &\zeta^2 - \zeta^5   \\
\end{array}
\right), 
\]

 where $\zeta= e^{2\pi i/7}$ and $\alpha = -1/\sqrt{-7}$, Elkies \cite{elkies}, Karcher and Weber \cite{karcher}. 

The points  
 \[
 p_1 = \left(
 \hspace{-.03cm}
\begin{array}{ccc}
1 \\
  0\\
  0
\end{array}
\hspace{-.03cm}
\right),
\hspace{.7cm}
 p_2 = \left(
\begin{array}{ccc}
0 \\
  1\\
  0
\end{array}
\right),
\hspace{.7cm}
 p_3 = \left(
\begin{array}{ccc}
0 \\
  0\\
  1
\end{array}
\right),
 \]
are the three centers of order $7$. The transformation $\phi_3$ cyclically permutes these points,
\[
\phi_3: p_1 \mapsto p_2 \mapsto p_3 \mapsto p_1.
\]
Moreover, $\phi_3$ has three fixed points in $\mathbb{CP}^2$: two centers of order $3$, $q_1,q_2\in\mathcal K$, and a third point $q_3\notin\mathcal K$, 
given by
\[
q_1 = \left(\hspace{-.1cm}
\begin{array}{ccc}
1 \\
  e^{2\pi \hspace{.02cm} i /3}\\
  e^{4\pi \hspace{.02cm} i /3}\\
\end{array}
\hspace{-.1cm}
\right),
\hspace{.7cm}
q_2 = \left(
\hspace{-.1cm}
\begin{array}{ccc}
1 \\
  e^{4\pi \hspace{.02cm} i /3}\\
  e^{2\pi \hspace{.02cm} i /3}\\
\end{array}
\hspace{-.1cm}
\right)
\hspace{.4cm} {\rm and} \hspace{.3cm}
q_3 = \left(
\begin{array}{ccc}
1 \\
1\\
  1
\end{array}
\right).
 \]
 
 \vspace{.1cm}
 
We now state other properties of the group, \cite{elkies, karcher}, and provide its real character table:

\vspace{.1cm}

$\cdot$ {\it
The group $G_{168}$ contains exactly $48$ elements of order $7$, divided into two conjugacy classes, $7A$ and $7B$, each consisting of $24$ elements, with $\phi_7\in 7A$.}

\vspace{.1cm}

$\cdot$ {\it
The $24$ centers of order $7$ form a single orbit under the action of $G_{168}$ on $\mathcal K$. These points are precisely the inflection points of the Klein quartic.}

\vspace{.1cm}

$\cdot$ {\it
The $56$ centers of order $3$ and the $84$ centers of order $2$ each form a single orbit under the action of $G_{168}$ on $\mathcal K$.}

\vspace{.1cm}

\begin{table}[H]
\[
\begin{array}{ccrrrrrrlc}
{\rm class } &  \big| & \,  {\bf 1} & {\bf 2} & {\bf 3} & {\bf 4}& {\bf 7A}& {\bf 7B}&\bf Schur& \bf Type\\
 \hline\vspace{-.3cm} & \big|\\ 
{\rm size} & \big| &  1& 21 &56 &42&24 &24\\
 \hline\vspace{-.5cm}\\ 
 \hline\vspace{-.3cm}  & \big|\\ 
 \rho_1 & \big| & 1 & 1 & 1 & 1   &1 &1 & \bf +1 &{\rm trivial}\\
 \hline\vspace{-.3cm}  & \big|\\ 
\rho_2 \oplus \rho_3 & \big|&  6& -2 &0 &2& -1 &-1 & \bf +1 &{\rm complex} 
\\
\hline\vspace{-.3cm} & \big|\\ 
\rho_4  & \big|& 6&2&0&0& -1 & -1 & \bf +1 &{\rm real} 
\\
 \hline\vspace{-.3cm} & \big|\\ 
 \rho_5 & \big| &  7& -1 &1 &-1&0 & 0  &\bf +1 &{\rm orthogonal} 
 \\
 \hline\vspace{-.3cm}  & \big|\\ 
\rho_6  & \big|&  8& 0 &-1 &0&- 1 & 1 &\bf +1 & {\rm orthogonal}
\\
\end{array}
\]

\centering
\vspace{.1cm}
\caption{Character table of the irreducible real representations of $G_{168}=GL(3,2)$. Complex character tables of finite groups of small order can be found in \cite{tim}. To obtain the characters of irreducible representations over $\mathbb R$, one must add the complex conjugate characters, such as $\rho_2$ and $\rho_3$, marked {\it complex} in the table. The remaining characters can be used directly, except for those marked {\it symplectic}, which must be doubled. The latter case does not occur for $G_{168}$.}
\label{tabla168}
\end{table}
By direct computation we get to the following fact.
\begin{lemma}
\label{lemmaklein}
The angles $\theta_i =  \frac{2\pi}{7}k_i$,  $k_i  \in \Z_7$, $i=1,2,3$,
verify 
$
2(\cos \theta_1  + \cos \theta_2  + \cos  \theta_3)  \in \Z.
$
 if and only if 

\begin{itemize}

\item[i)]
$k_1=k_2=k_3=0$ \ and  \ $ 2(\cos \theta_1  + \cos \theta_2  + \cos  \theta_3) =6$, or (up to a permutation) 


\item[ii)] $k_1=1,\ k_2=2,\  k_3=4$ \ and  \ 
$  2(\cos \theta_1  + \cos \theta_2  + \cos  \theta_3 ) =-1$, or 


\item[iii)] $k_1=6,\ k_2=5,\ k_3=3$ \ and  \ 
$  2(\cos \theta_1  + \cos \theta_2  + \cos  \theta_3 ) =-1.$
\end{itemize}
\end{lemma}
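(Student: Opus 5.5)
The plan is to reduce the question to linear algebra over $\mathbb{Q}$ in the real cyclotomic field $\mathbb{Q}(\cos\frac{2\pi}{7})$. Since the cosine is even, $\cos\theta_i$ depends only on the class of $k_i$ modulo $\pm$. Put
\[
c_j=\cos\tfrac{2\pi j}{7},\qquad j=1,2,3,
\]
so that $c_{7-j}=c_j$. Let $a_0$ be the number of indices $i$ with $k_i=0$, and let $a_j$ ($j=1,2,3$) be the number with $k_i\in\{j,7-j\}$. Then
\[
2(\cos\theta_1+\cos\theta_2+\cos\theta_3)=2a_0+2(a_1c_1+a_2c_2+a_3c_3),\qquad a_0+a_1+a_2+a_3=3 .
\]

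Two standard facts will be used. First, summing the nontrivial $7$th roots of unity gives $\sum_{k=1}^{6}\zeta^k=-1$, hence $c_1+c_2+c_3=-\tfrac12$. Second, $c_1$ has degree $3$ over $\mathbb{Q}$; its minimal polynomial is $8x^3+4x^2-4x-1$. Consequently $1,c_1,c_2$ are linearly independent over $\mathbb{Q}$, since $c_2=2c_1^2-1$ and so $\{1,c_1,c_2\}$ spans the same space as $\{1,c_1,c_1^2\}$.

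Substituting $c_3=-\tfrac12-c_1-c_2$, the quantity becomes
\[
2a_0-a_3+2(a_1-a_3)c_1+2(a_2-a_3)c_2 .
\]
This is an integer only if the coefficients of $c_1$ and $c_2$ vanish, that is, $a_1=a_2=a_3$.

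Combining $a_1=a_2=a_3$ with $a_0+a_1+a_2+a_3=3$ leaves exactly two possibilities:
\begin{itemize}
\item[$\cdot$] $a_0=3$. Then $k_1=k_2=k_3=0$ and the value is $6$, which is case i).
\item[$\cdot$] $a_0=0$ and $a_1=a_2=a_3=1$. Then the value is $2(c_1+c_2+c_3)=-1$. Here the three $k_i$ contain exactly one representative from each of the pairs $\{1,6\}$, $\{2,5\}$, $\{3,4\}$.
\end{itemize}
Conversely, both possibilities clearly give integers, so the condition is also sufficient.

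The main point of care is matching the second possibility to the wording of the lemma. The triples $(1,2,4)$ and $(6,5,3)$ of cases ii) and iii) are representatives of this configuration. Every admissible triple has the same multiset of cosines $\{c_1,c_2,c_3\}$ as these, and the lemma only concerns the values $\cos\theta_i$. So the phrase ``up to a permutation'' must be read with each $k_i$ taken modulo $k\mapsto -k$, i.e. up to replacing $\theta_i$ by $-\theta_i$. Under that reading the list i)--iii) is complete. Read strictly, it is not: for instance $(1,2,3)$ also gives $2(c_1+c_2+c_3)=-1$. The proof should state this convention explicitly rather than claim the literal list of two ordered triples.
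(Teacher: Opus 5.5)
Your argument is correct, and it takes a different route from the paper. The paper gives no argument beyond the phrase ``by direct computation'', which in effect means a finite check over the possible values (the $20$ multisets drawn from $\{1,c_1,c_2,c_3\}$, or all $7^3$ triples). In that check, integrality in the surviving cases still rests on the exact identity $c_1+c_2+c_3=-\tfrac12$. You replace the enumeration with the $\mathbb{Q}$-linear independence of $1,c_1,c_2$, which follows from the cubic minimal polynomial of $\cos\frac{2\pi}{7}$ and $c_2=2c_1^2-1$. This forces $a_1=a_2=a_3$ at once and yields the exact solution set with no case checking.

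Your approach also shows that the nontrivial solutions are all $48$ ordered triples taking one residue from each of $\{1,6\}$, $\{2,5\}$, $\{3,4\}$, whereas the literal list i)--iii) covers only $12$. So your counterexample $(1,2,3)$ is right, and the statement has to be read with each $k_i$ taken modulo $k\mapsto -k$; under that reading ii) and iii) are the same case.

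That reading is exactly what the paper's application needs. In Claims 1 and 2 the $\theta_i$ are rotation angles of $2\times 2$ blocks, defined only up to sign. The only consequence used is that when $2\sum\cos\theta_i=-1$ (character $-1$ for the $6$-dimensional representations, or $1+2\sum\cos\theta_i=0$ for $\rho_5$), no $k_i$ vanishes. Hence $A_7$ has no eigenvalue $1$ and $B_7$ fixes only a line, and your version of the lemma delivers this cleanly.
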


\vspace{.2cm}

In our final result, stated in the introduction, we describe the geometric properties of the minimal immersion of the Klein quartic into a sphere by first eigenfunctions.

\begin{proof}[\bf{Proof of Theorem \ref{thm:mi_teorema}}]
By Theorem \ref{family}, there exists a $G_{168}$-equivariant conformal minimal immersion $f$ of the Klein quartic into a sphere by first eigenfunctions. Moreover, if $ds^2$ is the induced metric, then
\[
2 \operatorname{Area}(f)=\bar{\lambda}_1(ds^2)=\Lambda_1(\mathcal K).
\]
Lower and upper bounds for the area of the immersion stated in {\bf iii}) are provided by the value of 
$\bar{\lambda }_{1}(ds^2_{-1})$ for  the hyperbolic metric, as estimated by Cook \cite{cook}, and by the upper bound
$\bar{\lambda}_1(dt^2) \leq 16(4-\sqrt{7})\pi$ for any metric on a compact surface of  genus \(\gamma = 3\), by Ros \cite{ros2}.

%

\vspace{.2cm}

Since the first eigenspace $V_1(ds^2)$ decomposes as a direct sum of $G_{168}$-invariant subspaces corresponding to irreducible representations, Table \ref{tabla168} yields
\[
V_1(ds^2)=V_1(\rho_1)\oplus V_1(\rho_2\oplus \rho_3)\oplus V_1(\rho_4)\oplus V_1(\rho_5)\oplus V_1(\rho_6).
\]
For the trivial representation, we have $V_1(\rho_1)=0$; see Corollary \ref{chi1}.

\vspace{.1cm}

Since the multiplicity $m_1=m_1(ds^2)$ is at most $9$ \cite{sevennec}, we conclude that $m_1\in\{6,7,8\}$. Consequently, $V_1(ds^2)$ must be one of the following spaces:
\[
V_1(\rho_2\oplus\rho_3),\qquad
V_1(\rho_4),\qquad
V_1(\rho_5),\qquad
V_1(\rho_6).
\]

\vspace{.1cm}

We shall show that the correct choice is $V_1(\rho_6)$, where $\rho_6$ denotes the irreducible representation of $G_{168}$ of dimension $8$.

\vspace{.2cm}

{\bf Claim 1.} {\it There are no $\rho$-equivariant minimal immersions $f:{\mathcal K}\longrightarrow S^5(1)\subset \R^6$ with representation 
$\rho$ equals to $\rho_2\oplus\rho_3$ or $\rho_4$.}

\vspace{.05cm}

In fact, suppose that there exists such an $f$ for one of the irreducible $6$-dimensional representations above  and
let $A_7\in O(6)$ be the orthogonal transformation associated to $\phi_7$,
\[
f\big( \phi_7(p)\big) = A_7f(p), \hspace{.3cm}\forall \hspace{.1cm} p\in {\mathcal K}.
\]
Since the character of the representation $\rho$ in the last columns 
is equal to $-1$, Table \ref{tabla168},  
from Lemma \ref{lemmaklein},  we get that  
taking suitable coordinates, 
the matrix  has the form
\begin{equation}
\setlength\arraycolsep{0pt}
\setlength\arraycolsep{0pt}
A_7 = \begin{pmatrix}
\boxed{~Rot_71} &       &        &    &   \\
           & \boxed{Rot_72} &        &      & \\
             &           & \boxed{Rot_73} &        &       \\
\end{pmatrix}   
\in O(6),
\label{r4}
\end{equation}

where the three boxes of the matriz $A_7$ correspond to planar rotations of noncero angles $\theta_1$, $\theta_2$ and $\theta_3$.
So, at the $7$-centers we  have $f(p_i)= f\circ \phi_7(p_i)= A_7 \hspace{.05cm} f(p_i)$ and, as $A_7$ has no real eigenvalues, we conclude that
$f(p_1)=f(p_2)=f(p_3)=0$ which contradicts the fact that the $f$ takes its values in the sphere. 

\vspace{.2cm}

{\bf Claim  2.} {\it There is not a $\rho_5$-equivariant conformal minimal immersion $ f : {\mathcal K}\longrightarrow S^6(1)\subset \R^7$ by first eigenfunctions.}

\vspace{.05cm}

  Assume there exists such and $f$.  In this case, 
 the order seven transformations of the representation correspond to the matrix
\begin{equation}
\setlength\arraycolsep{0pt}
\setlength\arraycolsep{0pt}
B_7 = \begin{pmatrix}
~1 ~ &       &        &    &   \\
&\boxed{Rot_71} &       &        &       \\
 &           & \boxed{Rot_72} &        &      & \\
&              &           & \boxed{Rot_73} &        &       \\
\end{pmatrix}  \in O(7)
\label{r5} 
\end{equation}
with the scalar $1$ at the first position plus three nontrivial rotational boxes of order 7, as above. 

\vspace{.1cm}

Let $L$ be the vector line of fixed vectors of $B_7$.
The argument used in claim 1 tells us that 
the vectors $f(p_1), \ f(p_2), \ f(p_3)$ belong to $S^6(1)\cap L$. As $\phi_3$ permutes the points $p_i$ we get  that 
$f(p_1)=f(p_2)=f(p_3)$.
 Then, it follows from the properties of conformal area of minimal surfaces that $Area(f)\geq 12\pi$. This is based on the fact that if a minimal immersion 
 has a triple point in its image, its area must exceed the area of three equatorial spheres (Li and Yau \cite{liyau}).  
 This contradicts item $iii)$ of the theorem, and we conclude the proof of {\bf i}): 
 
 \vspace{.1cm}
 
{\it The conformal $G_{168}$-equivariant  minimal immersion by first eigenfunctions
$
 f : {\mathcal K}\longrightarrow S^7(1)
 $ \hspace{.5cm} \linebreak
\textcolor{white}{.} \hspace{.05cm}  is  $\rho_6$-equivariant. }
 
\vspace{.1cm}

 The rest of the statements $ii)$ and $iv)$ of the theorem follow from Theorem \ref{family}.
 
 \vspace{.15cm}
 
Now we prove {\bf v}). Let \(h: \K \rightarrow S^n(1)\) be another 
$G_{168}$-equivariant conformal minimal immersion by first eigenfunctions. By Theorem \ref{mrkarpu}, the induced metric is, again, \(ds^{2}\) and from item {\bf i}) of the  theorem, que get that $n=7$ and $h$ is, up to an orthogonal transformation, $\rho_6$-equivariant, too.

  Since the linear coordinates of both immersions
\(  f,h: (\K,ds^2)\longrightarrow S^7(1),
  \) 
   lie in \(V_1(ds^2)\), 
we obtain an $8 \times 8$ matrix \(A\) such that \(h = Af\).

Equivariance implies that $A\rho_6(\phi)=\rho_6(\phi)A$ for every $\phi\in G_{168}$. Since the Frobenius–Schur indicator of $\rho_6$ is $+1$ (see Table \ref{tabla168}), the representation is absolutely irreducible. Hence, by Schur's lemma, $A=cI$ for some $c\in\mathbb R$. Since both immersions take values in the unit sphere, we have $c^2=1$, and therefore $h=\pm f$.   
\end{proof}

\begin{remark}
For hyperbolic metrics on surfaces of genus $3$, the multiplicity of the first eigenvalue is at most $8$, with equality attained by the Klein quartic \cite{maximefortier}. This surface is also conjectured to maximize the first eigenvalue among all hyperbolic surfaces of genus $3$ \cite{cook}.
\end{remark}

\vspace{.2cm}

To conclude, we point out two natural 
open problems concerning the first eigenvalue on surfaces of genus \(3\).
Let $\mathcal{M}\mathrm{etrics}(3)$ denote the space of Riemannian metrics on a closed surface of genus $3$, and consider the first eigenvalue functional
 \(
\bar{\lambda}_1:\mathcal{M}\mathrm{etrics}(3)\longrightarrow\mathbb{R}.
\)

\vspace{.06cm}

\hspace{.3cm} a) {\it Determine the Riemannian metric that maximizes the functional $\bar{\lambda}_1$.}

\vspace{.05cm}

The natural candidate is the metric described in Theorem \ref{thm:mi_teorema}.

\vspace{.1cm}

\hspace{.3cm} b) {\it Determine all critical points of $\bar{\lambda}_1$, or equivalently, classify minimal immersions by first eigenfunctions.}

\vspace{.1cm}

Among these critical points are the conformally equivariant minimal immersions associated with the Klein and Fermat quartics (see Table \ref{family3}), as well as the Lawson and Karcher--Pinkall--Sterling minimal surfaces of genus $3$ in $S^3(1)$ \cite{choe1}.

{\footnotesize

\noindent

{Antonio Ros,} { aros@ugr.es}  \vspace{-.10cm} \\
Department of Geometry and Topology and \vspace{-.10cm}\\
Institute of Mathematics (IMAG), \vspace{-.10cm}\\
University of Granada\vspace{-.10cm}\\
 18071 Granada, Spain.
 }

\end{document}